\newcommand{\lb}{\varLambda}
\newcommand{\bg}{\begin{equation}}
\newcommand{\ed}{\end{equation}}
\newcommand{\bga}{\begin{eqnarray}}
\newcommand{\eda}{\end{eqnarray}}
\def\cbdu{\par{\raggedleft$\Box$\par}}
\newtheorem {Theorem}  {Theorem}
\numberwithin{Theorem}{section}
\newtheorem {Lemma}[Theorem]  {Lemma}
\theoremstyle{definition}
\newtheorem{Definition}[Theorem]{Definition}
\theoremstyle{remark}
\chardef\csname pre amssym.def
\def\undefine#1{\let#1\undefined}
\def\newsymbol#1#2#3#4#5{\let\next@\relax
 \ifnum#2=\@ne\let\next@\msafam@\else
 \ifnum#2=\tw@\let\next@\msbfam@\fi\fi
 \mathchardef#1="#3\next@#4#5}
\def\mathhexbox@#1#2#3{\relax
 \ifmmode\mathpalette{}{\m@th\mathchar"#1#2#3}%
 \else\leavevmode\hbox{$\m@th\mathchar"#1#2#3$}\fi}
\def\hexnumber@#1{\ifcase#1 0\or 1\or 2\or 3\or 4\or 5\or 6\or 7\or 8\or
 9\or A\or B\or C\or D\or E\or F\fi}
\font\teneufm=eufm10 \font\seveneufm=eufm7 \font\fiveeufm=eufm5
\newcounter{remark}
\newcommand{\e}{\epsilon}
\renewcommand{\th}{\theta}
\newcommand{\R}{\mathbf{R}}
\renewcommand{\div}{\mbox{div}}
\def  \R   {{\mathbb R}}
\def  \T   {{\mathbb T}}
\def  \12  {{\frac{1}{2}}}
\def  \p   {\partial}
\def\build#1_#2^#3{\mathrel{\mathop{\kern 0pt#1}\limits_{#2}^{#3}}}
\begin{document}

\title[Determining modes for SQG]{Determining modes for the surface quasi-geostrophic equation}


\author [Alexey Cheskidov]{Alexey Cheskidov}
\address{Department of Mathematics, Stat. and Comp.Sci.,  University of Illinois Chicago, Chicago, IL 60607,USA}
\email{acheskid@uic.edu} 
\author [Mimi Dai]{Mimi Dai}
\address{Department of Mathematics, Stat. and Comp.Sci.,  University of Illinois Chicago, Chicago, IL 60607,USA}
\email{mdai@uic.edu} 

\thanks{The authors were partially supported by NSF grants
DMS--1108864, DMS--1517583, and DMS--1815069.}





\begin{abstract}
We introduce a determining wavenumber for the surface quasi-geostrophic (SQG) equation 
defined for each individual trajectory and then study its dependence on the
force. While in the subcritical and critical cases this wavenumber has a uniform upper bound, it may blow up
when the equation is supercritical. A bound on the determining wavenumber provides determining modes,
and measures the number of degrees of freedom of the flow, or resolution needed to describe a solution to the SQG equation.
\bigskip

{\em This paper is dedicated to Professor Edriss S. Titi on the occasion of his sixtieth birthday with friendship and admiration.}

\bigskip

KEY WORDS:  Surface quasi-geostrophic equation, determining modes, global attractor, De Giorgi method.

\hspace{0.02cm}CLASSIFICATION CODE: 35Q35, 37L30.
\end{abstract}

\maketitle


\section{Introduction}
\label{sec-intro}

In this paper we introduce a determining wavenumber $\lb_\theta(t)$ for the forced surface quasi-geostrophic (SQG) equation
\begin{equation}\begin{split}\label{QG}
\frac{\partial\theta}{\partial t}+u\cdot\nabla \theta+\nu\Lambda^\alpha\theta =f,\\
u=R^\perp\theta,
\end{split}
\end{equation}
on the torus $\mathbb T^2=[0,L]^2$, where $0<\alpha<2$, $\nu>0$, $\Lambda=\sqrt{-\Delta}$ is the Zygmund operator, and
\bg\notag
R^\perp\theta=\Lambda^{-1}(-\partial_2\theta,\partial_1\theta).
\ed
The scalar function $\theta$ represents the potential temperature and the vector function $u$ represents the fluid velocity. 
The initial data $\th(0) \in L^2(\mathbb{T}^2)$ and the force $f \in L^p(\mathbb{T}^2)$ for some $p>2/\alpha$ are assumed to have zero average.

The wavenumber $\lb_\theta(t)$ is defined solely based on the structure of the equation, but not on the force, regularity properties, or any known bounds on the solution.
We prove that if two complete weak solutions $\theta_1, \theta_2 \in L^\infty((-\infty,\infty);L^2)$  (i.e., lying on the global attractor) coincide on frequencies below
$\max\{\lb_{\theta_1}, \lb_{\theta_2}\}$, then $\theta_1 \equiv \theta_2$. While in the subcritical and critical cases this wavenumber has uniform upper bounds, it may blow up when the equation is supercritical. A bound on $\lb_\theta$ immediately
provides determining modes, which in some sense measure the number of degrees of freedom of the flow, or resolution needed to describe a solution to the SQG equation.

The first result of finite dimensionality of a flow was obtained by Foias and Prodi for the 2D Navier-Stokes equations (NSE) in \cite{FP}, where it was shown that low modes control high modes asymptotically as time goes to infinity. Then an explicit estimate on the number of determining modes was obtained by Foias, Manley, Temam, and Treve in \cite{FMTT}, and improved by Jones and Titi in \cite{JT}.
A related result, the finite dimensionality of the global attractor of the 2D NSE, was first proved by Foias and Temam in \cite{FT} (see Constantin, Foias, and Temam
\cite{CFT} for the best available bound). 
See also \cite{CFMT, FJKT, FMRT, FT84, FTiti} and references therein for more results in this direction.

Equation (\ref{QG}) with $\alpha=1$ describes the evolution of the surface temperature field in a rapidly rotating and stably stratified fluid with potential velocity \cite{CMT}. Being applicable in atmosphere and oceanography,
this model  is also very interesting from the mathematical point of view. Indeed, the behavior of solutions to (\ref{QG}) with $\nu=0$ in 2D and the behavior of potentially singular solutions to the Euler's equation in 3D have been found to be similar  both analytically and numerically (see \cite{CCW, CMT, CW, Pe} and the references therein).
Since $L^\infty$, the highest controlled norm, is critical when $\alpha=1$, 
equation (\ref{QG}) is referred as supercritical, critical and subcritical SQG for $0<\alpha<1$, $\alpha=1$ and $\alpha>1$ respectively.
The global regularity problem of the critical SQG equation is very challenging due to the balance of the nonlinear term and the dissipative term
in (\ref{QG}). This problem has since been resolved, with several different proofs and their adaptations to the case of a smooth force available \cite{CaV, CTV, CVicol, FPV, KN09, KNV}.

The long time behavior of  solutions to the critical SQG equations have been studied in 
\cite{CD,CCV, CTVan, CTV, Don, SS03, SS05}.
The first result on the existence of an attractor was obtained recently by Constantin, Tarfulea, and Vicol in  \cite{CTV}, where the authors studied the long time dynamics of regular solutions of the forced critical SQG equations using the nonlinear maximal principle \cite{CVicol}.
With the assumption that the force $f\in L^\infty(\mathbb T^2)\cap H^1(\mathbb T^2)$ and the initial data in $H^1(\mathbb T^2)$, the authors proved the existence of a compact attractor, which is a global attractor in the
classical sense in $H^s$ for $s\in(1,3/2)$, and it attracts all the points (but not bounded sets) in $H^1$. 
Moreover, the authors proved that the attractor has a finite box-counting dimension.

Later, Cheskidov and Dai \cite{CD} proved that the critical SQG equation (\ref{QG}) with $\alpha=1$ possesses a global attractor in
$L^2(\mathbb T^2)$, provided the force $f$ is solely in $L^p$ for $p>2$. As the first step, it is established that for any initial data in $L^2$, a weak (viscosity) solution is bounded in $L^\infty$ on any interval $[t_0, \infty)$, $t_0>0$. The main tool is an application of the De Giorgi iteration method to the forced critical SQG as it was done by Caffarelli and Vasseur in \cite{CaV} in the unforced case. This is the only 
part that requires the force to be in $L^p$ for some $p>2$. Second, in the spirit of  \cite{CCFS}, the Littlewood-Paley decomposition technique is used to show that bounded weak solutions have zero energy flux and hence satisfy the energy equality. The energy equality immediately implies the continuity of weak solutions in $L^2$. In the third step, an abstract framework of evolutionary systems introduced by Cheskidov and Foias \cite{CF} was followed to show the existence of a weak
global attractor. Finally, with all the above ingredients at hand, an abstract result established by Cheskidov in \cite{C} was applied to prove that the
weak global attractor is in fact a strongly compact strong global attractor. 

In a very recent paper \cite{CCV}, Constantin, Coti Zelati, and Vicol  showed that the $H^1$ attractor obtained in \cite{CTV} is indeed a global attractor in the classical sense,
i.e., it attracts bounded sets
in $H^1$. The main ingredient here is an estimate of a $C^\alpha$ norm of a solution in terms of the $L^\infty$ norms of the solution and the force, which was done using 
the Constantin-Vicol nonlinear maximal principle \cite{CVicol}. Since the $L^\infty$ norm is known to be bounded
thanks to the De Giorgi iteration  method, this automatically gives an absorbing ball in $C^\alpha$, which in turn implies the existence
of absorbing balls in $H^1$ and $H^{3/2}$, and hence asymptotic compactness in $H^1$. This results in the existence of the $H^1$ global attractor.

In this paper we start by introducing  a time-dependent determining wavenumber $\lb_\theta(t)$ defined for each individual trajectory $\theta(t)$ and then study its dependence on  $\alpha$ and $f$. Given a weak solution $\theta(t)$  of the SQG equation, we define 
\[
\lb_{\theta,r}(t)=\min\{\lambda_q:\lambda_p^{1-\alpha+\frac2r}\|\theta_{p}\|_r< c_{\alpha,r}\nu \quad \forall p>q, \quad \mbox { and }
\quad \lambda_q^{-\alpha} \sum_{p\leq q}\lambda_p\|\theta_p\|_\infty< c_{\alpha,r}\nu   \},
\]
for $r \in \mathcal{I}_\alpha$. Note that we use a convention $\min{\emptyset}=\infty$. Here $\lambda_q =\frac{2^q}{L}$, $\theta_q = \Delta_q \theta$ is the Littlewood-Paley projection of $\theta$ (see Section~\ref{sec:pre}), and
\[
c_{\alpha,r}=\left\{
\begin{split}
&\frac{c_0}{\alpha^2(r+1)^2}\left(1-2^{\frac2{r+1}-\frac2r}\right)^{\frac{\alpha(r+1)}{2}},&\quad 0<\alpha\leq 1,\\
&c_0(\alpha-1)^2\left(1-2^{\frac{\alpha-1}2-\frac2r}\right)^{\frac{2\alpha}{\alpha-1}}, &\quad  1<\alpha< 2,
\end{split}
\right.
\]
\[
\mathcal{I}_\alpha =  
\left\{
\begin{split}
&\textstyle \left(\frac{4}{\alpha}-1, \infty\right),&\quad 0<\alpha\leq 1,\\
&\textstyle\left(\frac{2\alpha}{\alpha-1}, \frac{4}{\alpha-1}\right),&\quad  1<\alpha< 2.
\end{split}
\right.
\]
for some absolute adimensional constant $c_0$ (hence $c_{\alpha,r}$ is adimensional). Actually, the unit for $c_0$ is $[c_0] = [\theta]/[u]$, but the SQG equation \eqref{QG} is written so that $\theta$ and $u$ have the same units. 

The first part in the definition of $\lb$ resembles the dissipation wavenumber introduced
by Cheskidov and Shvydkoy in \cite{CS} for the 3D Navier-Stokes equation, also defined in terms of a critical norm, but $L^\infty$ based, i.e., the smallest one. In \cite{CS} it was shown that in some sense the linear term is dominant
above that wavenumber. More precisely,  it is enough to control a weak solution of the 3D Navier-Stokes equations in the inertial range, i.e., below the dissipation wavenumber, in order to ensure regularity. The dissipation wavenumber was also adapted to the supercritical SQG by Dai in \cite{D},
where the smallest critical norm was used as well.

The determining wavenumber is much
more restrictive than the dissipation wavenumber. First, a larger critical norm appears in the first
condition of the definition of $\lb_{\theta,r}$. Second, $\lb_{\theta,r}$ not only controls high modes, but also low modes, as can be seen in the second condition. From the mathematical point of view, this is due to the fact that
there are more terms to control and fewer cancellations in this setting.

In the first part of the paper we show that $\lb$ is indeed a determining wavenumber. 
\begin{Theorem}\label{thm}
Let $\alpha \in (0,2)$ and $\theta_1(t)$ and $\theta_2(t)$ be weak solutions of the SQG equation~\eqref{QG}.  Let $\lb(t)=\max\{\lb_{\theta_1,r}(t), \lb_{\theta_2,r}(t)\}$
for some $r \in \mathcal{I}_\alpha$.
If
\begin{equation} \label{eq:dm-condition}
\theta_1(t)_{\leq \lb(t)}=\theta_2(t)_{\leq \lb(t)}, \qquad \forall t>0,
\end{equation}
then
\begin{equation}\notag
\lim_{t \to \infty} \|\theta_1(t) - \theta_2(t)\|_{B^0_{l,l}}=0,
\end{equation}
where $l=\alpha(r+1)/2$ when $\alpha \in (0,1]$, and $l=2\alpha/(\alpha-1)$ when $\alpha \in (1,2)$.

Moreover, if $\theta_1(t)$ and $\theta_2(t)$ are two complete (ancient) bounded in $L^2$ viscosity solutions,
i.e., $\theta_1, \theta_2 \in L^\infty((-\infty,\infty);L^2)$, and
\begin{equation} \label{eq:dm-condition}
\theta_1(t)_{\leq \lb(t)}=\theta_2(t)_{\leq \lb(t)}, \qquad \forall t<T,
\end{equation}
for some $T\in(-\infty,\infty]$, then
\[
\theta_1(t) = \theta_2(t), \qquad \forall t \leq T.
\]   
\end{Theorem}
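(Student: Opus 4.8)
The plan is to study the difference $w:=\theta_1-\theta_2$, which, since $R^\perp\theta_i$ is divergence free, solves
\[
\partial_t w + \nabla\cdot(u_1 w) + \nabla\cdot(v\,\theta_2) + \nu\Lambda^\alpha w = 0, \qquad u_1=R^\perp\theta_1,\quad v:=R^\perp w .
\]
The hypothesis \eqref{eq:dm-condition} says precisely that $w(t)$ is spectrally supported in frequencies above $\lb(t)$: writing $Q=Q(t)$ for the integer with $\lambda_{Q(t)}=\lb(t)$, we have $w_q:=\Delta_q w=0$ for all $q\le Q(t)$, while, by the definition of $\lb$, both $\theta_1$ and $\theta_2$ satisfy
\[
\lambda_p^{1-\alpha+\frac2r}\|\theta_{i,p}\|_r<c_{\alpha,r}\nu\ \ (p>Q),\qquad \lambda_Q^{-\alpha}\sum_{p\le Q}\lambda_p\|\theta_{i,p}\|_\infty<c_{\alpha,r}\nu .
\]
The goal is to control $S(t):=\sum_q\|w_q(t)\|_l=\|w(t)\|_{B^0_{l,1}}$, which dominates $\|w(t)\|_{B^0_{l,l}}$ since $l>2$.

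The next step is to work blockwise. Applying $\Delta_q$ and pairing with $|w_q|^{l-2}w_q$, the dissipative term is bounded below by a C\'ordoba--C\'ordoba type positivity inequality $\int_{\mathbb{T}^2}(\Lambda^\alpha w_q)|w_q|^{l-2}w_q\,dx\ge c(\alpha,l)\lambda_q^\alpha\|w_q\|_l^l$, so that
\[
\frac1l\frac{d}{dt}\|w_q\|_l^l+c(\alpha,l)\nu\lambda_q^\alpha\|w_q\|_l^l\le\Big|\int_{\mathbb{T}^2}\Delta_q\big(\nabla\cdot(u_1w)+\nabla\cdot(v\theta_2)\big)\,|w_q|^{l-2}w_q\,dx\Big| .
\]
The heart of the argument is to estimate the right side via Bony's paraproduct decomposition. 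The transport part $\Delta_q(u_{1,\le q-2}\cdot\nabla w_q)$ contributes nothing to leading order, because $\int u_{1,\le q-2}\cdot\nabla w_q\,|w_q|^{l-2}w_q=-\tfrac1l\int(\nabla\cdot u_{1,\le q-2})|w_q|^l=0$; only the commutator $[\Delta_q,u_{1,\le q-2}\cdot\nabla]w$ survives, and it is controlled by $\big(\sum_{m\le q}\lambda_m\|u_{1,m}\|_\infty\big)\sum_{|p-q|\le2}\|w_p\|_l$ — splitting this sum at $m=Q$, the part $m\le Q$ is $<c_{\alpha,r}\nu\lambda_Q^\alpha$ by the second inequality above, while for $Q<m\le q$ one uses Bernstein ($\|u_{1,m}\|_\infty\lesssim\lambda_m^{2/r}\|\theta_{1,m}\|_r$) together with the first inequality and sums the resulting geometric series in $m$. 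In every remaining interaction one of the fields $\theta_i$, $u_i$ appears at some frequency $\lambda_m$; whenever $m>Q$ we substitute the first inequality, whenever $m\le Q$ the second, and we reconcile the $L^r$, $L^\infty$ and $L^l$ norms by Bernstein's inequality, choosing the split point between the low and high regimes of each paraproduct so as to optimize the geometric series that are summed. The outcome is that the right side is bounded by $\nu\,(\beta*a)_q\,\|w_q\|_l^{l-1}$, where $a_p:=\lambda_p^\alpha\|w_p\|_l$ and $\beta=(\beta_j)_j$ is a summable kernel with $\sum_j\beta_j<\tfrac12 c(\alpha,l)$ — this last smallness being exactly what the factor $c_{\alpha,r}$ (with $c_0$ chosen small enough) and the range $r\in\mathcal I_\alpha$ (which makes each geometric series converge) are designed to guarantee. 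I expect this bookkeeping — isolating every type of interaction, keeping the estimates dimensionally consistent, and tracking the Bernstein split points so that the geometric factors are beaten by the powers built into $c_{\alpha,r}$ — to be the main obstacle.

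Granting this, for every $q>Q(t)$ one has $\tfrac1l\tfrac{d}{dt}\|w_q\|_l^l+c(\alpha,l)\nu\lambda_q^\alpha\|w_q\|_l^l\le\nu(\beta*a)_q\|w_q\|_l^{l-1}$ (with $\|w_q\|_l=0$ for $q\le Q$); after a routine regularization, dividing by $\|w_q\|_l^{l-1}$ linearizes this, and summing over $q$ turns the convolution into multiplication by $\sum_j\beta_j$, giving
\[
\frac{d}{dt}S+\Big(c(\alpha,l)-\textstyle\sum_j\beta_j\Big)\nu\sum_q\lambda_q^\alpha\|w_q\|_l\le0 .
\]
Since $\sum_j\beta_j<\tfrac12 c(\alpha,l)$ and every nonzero $\|w_q\|_l$ has $\lambda_q\ge 1/L$, this yields $\tfrac{d}{dt}S\le-\tfrac12 c(\alpha,l)\nu L^{-\alpha}S$, so $S(t)\to0$ exponentially and hence $\|w(t)\|_{B^0_{l,l}}\le S(t)\to0$, which is the first assertion. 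For the second assertion, two complete solutions bounded in $L^2$ enjoy the uniform-in-time a priori bounds recalled in the introduction, so that $M:=\sup_{t}\|w(t)\|_{B^0_{l,1}}<\infty$; the differential inequality holds for all $t<T$, so for any $t_0<t_1<T$ we get $S(t_1)\le S(t_0)e^{-\frac12 c(\alpha,l)\nu L^{-\alpha}(t_1-t_0)}\le M e^{-\frac12 c(\alpha,l)\nu L^{-\alpha}(t_1-t_0)}$, and letting $t_0\to-\infty$ forces $S(t_1)=0$, i.e.\ $\theta_1(t_1)=\theta_2(t_1)$, for every $t_1<T$. Since bounded viscosity solutions are continuous in $L^2$ (energy equality), $\theta_1(T)=\theta_2(T)$ as well, which completes the proof.
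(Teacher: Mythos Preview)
Your overall strategy---paraproduct decomposition, commutator cancellation in the transport term, splitting each interaction at $Q$, and invoking the two conditions defining $\lb$---matches the paper's. The gap lies in your choice to linearize and work with $S=\|w\|_{B^0_{l,1}}$ rather than with $\|w\|_{B^0_{l,l}}^l$. Look at the low--high piece of $\nabla\cdot(v\,\theta_2)$, namely $\Delta_q(R^\perp w_{\le p-2}\cdot\nabla(\theta_2)_p)$ with $|p-q|\le2$. After H\"older (with $\tfrac1m+\tfrac1r+\tfrac{l-1}{l}=1$), Bernstein, and the first condition on $\theta_2$, its contribution to the $q$-shell estimate, divided by $\|w_q\|_l^{l-1}$, is
\[
\lesssim c_{\alpha,r}\nu\,\lambda_q^{\alpha-\frac2r}\sum_{p'\le q}\lambda_{p'}^{\frac2r}\|w_{p'}\|_l \;=\; c_{\alpha,r}\nu\sum_{p'\le q}a_{p'}\,\lambda_{q-p'}^{\alpha-\frac2r}.
\]
Since $r\in\mathcal I_\alpha$ forces $r>2/\alpha$ in every regime, the exponent $\alpha-\tfrac2r$ is \emph{positive} and the kernel $\beta_j=\lambda_j^{\alpha-2/r}$ ($j\ge0$) is not summable; your convolution bound cannot hold for this term, and no alternative H\"older split transfers the factor $\lambda_q^{\alpha}$ from the test frequency to the single low-frequency factor $\|w_{p'}\|_l$. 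The paper avoids this by not linearizing: it keeps $\|w_q\|_l^{l-1}$ and uses Young's inequality, so that the weight splits as $\lambda_q^{\alpha(l-1)/l}\|w_q\|_l^{l-1}\cdot\lambda_{p'}^{\alpha/l}\|w_{p'}\|_l\cdot\lambda_{q-p'}^{\alpha/l-2/r}$, where the new kernel \emph{is} summable precisely because $r<2l/\alpha$. This is exactly why the argument is run in $B^0_{l,l}$.

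There is a second, independent gap in the ancient case. You claim $\sup_t\|w(t)\|_{B^0_{l,1}}<\infty$ from the a priori bounds in the introduction, but those give only $\theta_i\in L^\infty_t(L^2\cap L^\infty)$ (the $L^\infty$ bound via De Giorgi iteration), hence $L^\infty_t L^l$, hence $L^\infty_t B^0_{l,l}$ through the embedding $L^l\hookrightarrow B^0_{l,l}$ valid for $l\ge2$. They do not control $B^0_{l,1}$, and in the supercritical range there is no further regularity to invoke. The paper's choice of $B^0_{l,l}$ is made precisely so that this interpolation closes the $t_0\to-\infty$ limit.
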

Note that the second part of the theorem implies that for any solutions $\theta_1(t)$, $\theta_2(t)$ on
the attractor $\mathcal{A}$, we have $\theta_1 \equiv \theta_2$ provided $(\theta_1)_{\leq \lb} \equiv
(\theta_2)_{\leq \lb}$, where
\[
\mathcal{A} = \{\theta(0): \theta(t) \text{ is a complete bounded solution, i.e., } \theta \in L^\infty((-\infty,\infty);L^2)\}.
\]
In \cite{CD}, Cheskidov and Dai proved that $\mathcal{A}$ is a compact global attractor in the classical sense when $\alpha=1$. It uniformly attracts bounded sets in $L^2$,
it is the minimal closed attracting set, and it is the $L^2$-omega limit of the absorbing ball $B_{L^2}$. 
Clearly, this holds in the subcritical case $\alpha>1$ as well where we also have all the ingredients to apply
the framework of evolutionary systems \cite{C}. However, in the supercritical case $\alpha<1$, we only
know the existence of a weak global attractor at this point.

In the second part of the paper, using the De Giorgi iteration method, we extend the $L^\infty$ estimate in \cite{CD} to the whole range $\alpha >0$.
This argument requires the force $f$ to be in $L^p$ for some $p>2/\alpha$ and implies
\begin{equation} \label{eq:Linfty-est-intro}
\|\theta\|_\infty \lesssim \frac{\|f\|_p}{\nu}, \qquad \forall \theta \in \mathcal{A}.
\end{equation}
Note that this estimate holds for all $\alpha>0$, and it explains the choice of the space $B^0_{l,l}$ in Theorem~\ref{thm}. Indeed, thanks to the Littlewood-Paley Theorem or
simply the interpolation
\[
\|\theta\|_{B^0_{l,l}} \lesssim \|\theta\|_\infty^{1-\frac{2}{l}} \|\theta\|_2^{\frac{2}{l}},
\]
the $B^0_{l,l}$ norm of a viscosity solution is bounded on the global attractor. Thus, one can take a limit as the initial time goes to $-\infty$ and show that the difference between
two solutions that coincide below $\lb$ is zero (See Theorem \ref{thm-main-bound} and Section \ref{sec-pf-thm}). On the other hand, the $B^0_{l,l}$ norm enjoys a better estimate than the $L^l$ norm. Based on \eqref{eq:Linfty-est-intro}, we are able to establish an upper bound for $\lb_{\theta,r}$
in the subcritical case $\alpha \in (1,2)$. Namely, 

\begin{Theorem}\label{thm-theta-est1}
Assume $\alpha \in (1,2)$. For some $r\in\left(\frac{2\alpha}{\alpha-1}, \frac4{\alpha-1}\right)$, we have
\[
\lb_{\theta,r}\lesssim \left( \frac{\|f\|_2}{(\alpha-1)^2\nu^2} \right)^{\frac{2}{\alpha-1}},
\]
for large enough $t$ (or when $\theta \in \mathcal{A}$).  
\end{Theorem}

Here we took $p=2$ for simplicity. This estimate of $\lb_{\theta,r}$ gives the following bound on the number of determining modes $N$:
\[
N \lesssim  \left( \frac{\|f\|_2}{(\alpha-1)^2\nu^2} \right)^{\frac{4}{\alpha-1}}.
\]
In the critical case $\alpha=1$, the $L^\infty$ estimate
clearly is not enough to obtain a bound on $\lb$. However, combining it with the H\"older estimate
$\|\theta(t)\|_{C^h} \lesssim \|\theta(0)\|_{\infty} + \frac{\|f\|_\infty}{\nu}$ for some small $h$ and
large $t$, obtained by Constantin, Coti Zelati and Vicol in \cite{CCV}, we show that 
\begin{Theorem}\label{thm-theta-est2}
Let $\alpha=1$. Assume $f\in L^\infty\cap H^1$ and the initial data $\theta(0)\in H^1$. Then the estimate
\[
\lb_{\theta,r}\lesssim \left(\frac{\|f\|_\infty}{\nu^2}\right)^\frac{c\|f\|_\infty}{\nu^2},
\]
holds for some constant $c$ depending on $L$ and some large enough $r$.  
\end{Theorem}

Finally, the framework developed in this paper can be applied to other dissipative systems, such as the Navier-Stokes equations (see \cite{CD1,CDK}).

\bigskip

\section{Preliminaries}
\label{sec:pre}

\subsection{Notations}
We denote by $A\lesssim B$ an estimate of the form $A\leq C B$ with
some absolute constant $C$, and by $A\sim B$ an estimate of the form $C_1
B\leq A\leq C_2 B$ with some absolute constants $C_1$, $C_2$. 
We write $\|\cdot\|_p=\|\cdot\|_{L^p}$, and $(\cdot, \cdot)$ stands for the $L^2$-inner product.

\subsection{Littlewood-Paley decomposition}
\label{sec:LPD}
The techniques presented in this paper rely strongly on the Littlewood-Paley decomposition, which we recall  here briefly. For a more detailed description on  this theory we refer readers to the books by Bahouri, Chemin and Danchin \cite{BCD}, and Grafakos \cite{Gr}. 

Denote $\lambda_q=\frac{2^q}{L}$ for integers $q$. A nonnegative radial function $\chi\in C_0^\infty(\R^2)$ is chosen such that 
\begin{equation}\label{eq-xi}
\chi(\xi)=
\begin{cases}
1, \ \ \mbox { for } |\xi|\leq\frac{3}{4}\\
0, \ \ \mbox { for } |\xi|\geq 1.
\end{cases}
\end{equation}
Let 
\bg\notag
\varphi(\xi)=\chi(\xi/2)-\chi(\xi)
\ed
and
\begin{equation}\notag
\varphi_q(\xi)=
\begin{cases}
\varphi(2^{-q}\xi)  \ \ \ \mbox { for } q\geq 0,\\
\chi(\xi) \ \ \ \mbox { for } q=-1.
\end{cases}
\end{equation}
For a tempered distribution vector field $u$ we define its  Littlewood-Paley projection $u_q$ in the following way:
\begin{equation}\notag
\begin{split}
& h_q=\sum_{k\in \mathbb Z^2}\varphi_q(k)e^{i\frac{2\pi k\cdot x}{L}},\\
&u_q:=\Delta_qu=\sum_{k\in \mathbb Z^2}\hat u_k\varphi_q(k)e^{i\frac{2\pi k\cdot x}{L}}=\frac{1}{L^2}\int_{\T^2} h_q(y)u(x-y)dy,  \qquad q\geq -1,
\end{split}
\end{equation}
where $\hat{u}_k$ is the $k$th Fourier coefficient of $u$, i.e.,
\[
\hat{u}_k = \frac{1}{L^2}\int_{\mathbb{T}^2} u(x) e^{-i\frac{2\pi k \cdot x}{L}} \, dx.
\] 
Then we have
\bg\notag
u=\sum_{q=-1}^\infty u_q
\ed
in the sense of distributions. 
Let $h(x)$ be the inverse Fourier transform of $\phi(\xi)$, i.e.,
\[
h(x) = \int_{\mathbb{R}^2} \phi(\xi) e^{2\pi i \xi \cdot x} \, d\xi.
\]
By the Poisson summation formula, we have
\[
\begin{split}
\sum_{n\in \mathbb{Z}^2} h(x+2^qn)&=2^{-2q}\sum_{k\in \mathbb{Z}^2} \phi(2^{-q}k)e^{i\frac{2\pi 2^{-q}k \cdot x}{L}}\\
&=2^{-2q}h_q(2^{-q}x).
\end{split}
\]
In particular, 
\begin{equation} \label{eq:qwer}
\int_{\mathbb{T}^2}  |h_q(x)| \, dx  \lesssim 1, \qquad \int_{\mathbb{T}^2}  d(0,x)|\nabla h_q(x)| \, dx \lesssim 1,
\end{equation}
where $d(\cdot,\cdot)$ denotes the distance on $\mathbb{T}^2$.

To simplify the notation, we denote
\bg\notag
u_{\leq Q}=\sum_{q=-1}^Qu_q, \qquad u_{(Q,R]}=\sum_{p=Q+1}^Ru_p, \qquad \tilde u_q=\sum_{q-1\leq p\leq q+1}u_p. 
\ed
Notice that the identity 
\begin{equation}\label{near-sum}
\sum_{q-r\leq p\leq q+r}\Delta_qu_p=u_q, \qquad r=1,2,\dots
\end{equation}
holds since $\sum_{q-r\leq p\leq q+r}\varphi_q\varphi_p=\varphi_q$. In particular, $\Delta_q \tilde{u}_q = u_q$.

We will also use the Besov $B^0_{\ell,\ell}$ norm defined as
\[
\|u\|_{B^0_{\ell,\ell}} = \left( \sum_{q=-1}^\infty \|u_q\|_{\ell}^{\ell} \right)^\frac{1}{\ell}.
\]
The following inequalities will be used throughout the paper:
\begin{Lemma}\label{le:bern}(Bernstein's inequality) 
Let $r\geq s\geq 1$. Then for all tempered distributions $u$, 
\bg\notag
\|u_q\|_{r}\lesssim \lambda_q^{2(\frac{1}{s}-\frac{1}{r})}\|u_q\|_{s}.
\ed
\end{Lemma}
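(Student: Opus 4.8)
The plan is to reduce the statement to Young's convolution inequality together with a scaling computation for the convolution kernel. The starting observation is that $\widehat{u_q}$ is supported in the annulus $\{|\xi|\sim\lambda_q\}$ for $q\geq 0$ (and in a ball for $q=-1$), so I would first build a ``fattened'' multiplier that reproduces $u_q$. Concretely, choose a fixed $\tilde\varphi\in C_0^\infty(\R^n)$ with $\tilde\varphi\equiv 1$ on the support of $\varphi$ (and a companion $\tilde\chi\equiv 1$ on the support of $\chi$ for the $q=-1$ piece), set $\tilde\varphi_q(\xi)=\tilde\varphi(2^{-q}\xi)$, and define the associated periodic kernel $\tilde h_q$ exactly as $h_q$ is defined in Section~\ref{sec:LPD}. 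Since $\tilde\varphi_q\equiv 1$ on the frequency support of $u_q$, one obtains the reproducing identity $u_q=\frac{1}{L^2}\int_{\T^2}\tilde h_q(y)\,u_q(x-y)\,dy$, i.e. $u_q=\tilde h_q * u_q$ in the periodic sense, which is the identity I would exploit.

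Next I would apply Young's inequality for periodic convolutions: for exponents $t,r,s$ satisfying $1+\frac1r=\frac1t+\frac1s$ one has $\|\tilde h_q * u_q\|_r\leq\|\tilde h_q\|_t\,\|u_q\|_s$. The hypothesis $r\geq s\geq 1$ is exactly what guarantees $\frac1t=1-\big(\frac1s-\frac1r\big)\in[0,1]$, so $t\geq 1$ and Young's inequality is applicable. It then remains only to control the kernel norm, for which I would use dimensional analysis. On $\R^n$ the inverse Fourier transform of $\tilde\varphi(2^{-q}\cdot)$ equals $\lambda_q^{n}\,(\check{\tilde\varphi})(\lambda_q\,\cdot)$ up to the fixed normalization, so its $L^t(\R^n)$ norm is $\lambda_q^{n(1-1/t)}\|\check{\tilde\varphi}\|_t$, and $\|\check{\tilde\varphi}\|_t$ is a finite absolute constant because $\check{\tilde\varphi}$ is Schwartz. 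Combining this with the defining relation between the exponents gives
\[
\|\tilde h_q\|_t\lesssim\lambda_q^{n(1-\frac1t)}=\lambda_q^{n(\frac1s-\frac1r)},
\]
and feeding this back into Young's inequality produces the claimed bound, the kernel constant being absolute (it depends only on the fixed profiles $\chi,\varphi$) and hence suppressed in the paper's $\lesssim$ convention.

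The main obstacle is that everything lives on the torus $\T^2$ rather than on $\R^n$, so $\tilde h_q$ is the $L$-periodization of the whole-space kernel rather than the kernel itself, and the clean scaling identity above is exact only on $\R^n$. The step requiring genuine care is thus bounding $\|\tilde h_q\|_{L^t(\T^2)}$ by $\lambda_q^{n(1-1/t)}$ uniformly in $q$. I would handle this via Poisson summation: $\tilde h_q$ is the periodization of the Schwartz function $\lambda_q^{n}(\check{\tilde\varphi})(\lambda_q\,\cdot)$, whose mass concentrates at scale $\lambda_q^{-1}$, so for all but finitely many small $q$ the periodic $L^t$ norm is comparable to the whole-space norm, while the remaining finitely many low-frequency cases (including $q=-1$) involve only finitely many Fourier modes and are immediate from equivalence of all norms on that finite-dimensional space. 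Assembling these pieces yields the inequality in the stated form.
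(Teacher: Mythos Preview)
The paper does not supply a proof of this lemma at all; Bernstein's inequality is simply stated as a standard tool in Section~\ref{sec:LPD}, with the references \cite{BCD,Gr} given for background on Littlewood--Paley theory. Your argument via the reproducing identity $u_q=\tilde h_q\ast u_q$, Young's convolution inequality, and the scaling computation $\|\tilde h_q\|_t\lesssim\lambda_q^{n(1-1/t)}=\lambda_q^{n(1/s-1/r)}$ is exactly the standard proof found in those references, and it is correct. Your handling of the torus-versus-$\R^n$ issue (Poisson summation for large $q$, finite-dimensionality for the finitely many small $q$) is also the right way to pass from the whole-space estimate to the periodic setting; note that the factors of $L$ coming from $\lambda_q=2^q/L$ versus the dilation by $2^{-q}$ are fixed constants and are absorbed into the implicit constant in the $\lesssim$ convention. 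So there is nothing to compare against: you have supplied the textbook proof that the paper omits.
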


\begin{Lemma}\label{lp}
Assume $2\leq \ell<\infty$ and $0\leq \alpha\leq 2$. Then
\begin{equation}\notag
\ell\int u_q\Lambda^\alpha u_q|u_q|^{\ell-2} \, dx\gtrsim  \lambda_q^\alpha\|u_q\|_{\ell}^\ell.
\end{equation}
\end{Lemma}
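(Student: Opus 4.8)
The plan is to prove this pointwise-tested dissipation bound in physical space, using the singular-integral representation of $\Lambda^\alpha$ on the torus together with a symmetrization, thereby reducing the estimate to a lower bound for a frequency-localized fractional Sobolev seminorm. Since $u_q$ is a trigonometric polynomial, all the manipulations below are justified. The endpoints are immediate: for $\alpha=0$ the left-hand side equals $l\|u_q\|_l^l$, and for $\alpha=2$ one integrates by parts to obtain $l(l-1)\int|u_q|^{l-2}|\nabla u_q|^2\,dx$ and applies an $L^l$ Bernstein inequality. So I treat $0<\alpha<2$, and I take $q\ge 0$, the range in which $u_q$ has Fourier support in the annulus $\{|\xi|\sim\lambda_q\}$. (For $q=-1$ the admissible Fourier modes reduce to the constant mode, i.e.\ the mean of $u$, so both sides vanish under the paper's zero-average convention and there is nothing to prove.)

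First I would use, for $0<\alpha<2$ and zero-mean $f$ on $\T^n$, the representation
\[
\Lambda^\alpha f(x)=\mathrm{p.v.}\int_{\T^n}\bigl(f(x)-f(y)\bigr)\,\mathcal{K}(x-y)\,dy,
\]
where $\mathcal{K}$ is the periodization of $c_{n,\alpha}|z|^{-(n+\alpha)}$; it is nonnegative, symmetric, and satisfies $\mathcal{K}(x-y)\gtrsim d(x,y)^{-(n+\alpha)}$, with $d$ the geodesic distance on $\T^n$. Applying this to $f=u_q$, pairing with $l|u_q|^{l-2}u_q$, and symmetrizing in $x\leftrightarrow y$ gives
\[
l\int_{\T^n} u_q\,\Lambda^\alpha u_q\,|u_q|^{l-2}\,dx=\frac{l}{2}\iint_{\T^n\times\T^n}\bigl(|u_q(x)|^{l-2}u_q(x)-|u_q(y)|^{l-2}u_q(y)\bigr)\bigl(u_q(x)-u_q(y)\bigr)\,\mathcal{K}(x-y)\,dx\,dy.
\]
I would then invoke the elementary scalar inequality $(|a|^{l-2}a-|b|^{l-2}b)(a-b)\ge 2^{2-l}|a-b|^l$, valid for all real $a,b$ and $l\ge 2$ (with equality at $a=-b$), which makes the integrand nonnegative and yields
\[
l\int_{\T^n} u_q\,\Lambda^\alpha u_q\,|u_q|^{l-2}\,dx\gtrsim \iint_{\T^n\times\T^n}\mathcal{K}(x-y)\,|u_q(x)-u_q(y)|^l\,dx\,dy.
\]

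The right-hand side bounds from below the fractional Sobolev (Gagliardo) seminorm of $u_q$ of order $s=\alpha/l$ in $L^l$: using $\mathcal{K}(x-y)\gtrsim d(x,y)^{-(n+\alpha)}$, and since $0<\alpha/l<1$ for $2<l<\infty$ and $0<\alpha<2$, I obtain
\[
\iint_{\T^n\times\T^n}\mathcal{K}(x-y)\,|u_q(x)-u_q(y)|^l\,dx\,dy\gtrsim\iint_{\T^n\times\T^n}\frac{|u_q(x)-u_q(y)|^l}{d(x,y)^{n+\alpha}}\,dx\,dy\sim\|\Lambda^{\alpha/l}u_q\|_l^l,
\]
the last equivalence being the torus version of the Gagliardo–Riesz identity for zero-mean functions. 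It then remains to prove the lower, or ``inverse,'' Bernstein inequality $\|\Lambda^{\alpha/l}u_q\|_l\gtrsim\lambda_q^{\alpha/l}\|u_q\|_l$; raising it to the power $l$ and chaining the displays above delivers $\lambda_q^\alpha\|u_q\|_l^l$, which is the claim.

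The main obstacle is precisely this inverse Bernstein inequality in $L^l$ with $l\ne 2$, where Plancherel is unavailable; it is the nontrivial counterpart of Lemma~\ref{le:bern}. I would prove it by a multiplier/kernel argument: choose a fattened cutoff $\tilde\varphi_q$ with $\tilde\varphi_q(D)u_q=u_q$ and write $u_q=m_q(D)\Lambda^{\alpha/l}u_q$ with symbol $m_q(\xi)=|\xi|^{-\alpha/l}\tilde\varphi_q(\xi)=\lambda_q^{-\alpha/l}\psi(\xi/\lambda_q)$ for a fixed bump $\psi$ supported away from the origin. The associated periodized convolution kernel has $L^1$ norm $\lesssim\lambda_q^{-\alpha/l}$ by scaling, uniformly in $q$ (using $\lambda_q\ge \lambda_0=1/L$), so Young's inequality gives $\|u_q\|_l\lesssim\lambda_q^{-\alpha/l}\|\Lambda^{\alpha/l}u_q\|_l$, as needed. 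The implicit constants throughout depend on $l$ (through the factor $2^{2-l}$) and on $\alpha$, in keeping with the $\gtrsim$ in the statement; making them explicit would recover the $l$- and $\alpha$-dependence that enters the constants $c_{\alpha,r}$ used elsewhere in the paper.
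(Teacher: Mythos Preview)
The paper does not prove this lemma; it simply cites \cite{CMZ, CC} and moves on. So there is no in-paper proof to compare against, and your write-up supplies what the paper outsources. For the range $0<\alpha<2$ your argument is correct and is essentially the route taken in the cited literature: the singular-integral representation of $\Lambda^\alpha$, symmetrization in $(x,y)$, the scalar inequality $(|a|^{l-2}a-|b|^{l-2}b)(a-b)\ge 2^{2-l}|a-b|^l$, and then a lower Bernstein step via the multiplier $m_q(\xi)=|\xi|^{-\alpha/l}\tilde\varphi_q(\xi)$. The intermediate passage through the Gagliardo--Bessel equivalence $[\,\cdot\,]_{W^{\alpha/l,l}}\sim\|\Lambda^{\alpha/l}\cdot\|_l$ is legitimate but slightly roundabout; since the Gagliardo seminorm is directly the $B^{\alpha/l}_{l,l}$ seminorm, and $\|u_q\|_{B^{\alpha/l}_{l,l}}\sim\lambda_q^{\alpha/l}\|u_q\|_l$ is immediate from frequency localization, you could bypass the Bessel space entirely.

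There is one genuine gap: the endpoint $\alpha=2$ is \emph{not} immediate in the way you suggest. After integrating by parts you get
\[
l(l-1)\int|u_q|^{l-2}|\nabla u_q|^2\,dx=\frac{4(l-1)}{l}\,\|\nabla g\|_2^2,\qquad g=|u_q|^{l/2-1}u_q,
\]
but $g$ is \emph{not} frequency-localized to $\{|\xi|\sim\lambda_q\}$ (products and powers spread the spectrum), so no ordinary Bernstein inequality yields $\|\nabla g\|_2^2\gtrsim\lambda_q^2\|g\|_2^2$. Extracting the factor $\lambda_q^2$ here is precisely the nontrivial ``generalized Bernstein inequality'' that \cite{CMZ} was written to establish. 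Since the paper works only in the range $0<\alpha<2$, this has no downstream effect, but you should either drop the $\alpha=2$ remark or replace it with a citation.
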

For a proof of Lemma \ref{lp}, see \cite{CMZ, CC}.
\subsection{Riesz transform}
Here we recall the Riesz transform, a Fourier multiplier with the following symbol:
\[
(R_j f\hat)_k =
\left\{
 \begin{split}
 i \frac{k_j}{|k|} \hat{f}_k&, & k \ne 0,\\
 0&,  &k=0,
 \end{split}
 \right.
\]
where $j=1,2$.
Similar to \eqref{eq:qwer}, we can obtain  
\[
\|r_{j,q}\|_{L^1(\mathbb{T}^2)} \lesssim 1,
\] 
where
\[
r_{j,q}(x)= \sum_{k\in \mathbb Z^2\setminus \{0\}}i\frac{k_j}{|k|}\varphi_q(k)e^{i\frac{2\pi k\cdot x}{L}}.
\]
Hence, by Young's convolution inequality, $\|R_j \Delta_q f\|_\ell \lesssim \|f\|_\ell$. Moreover,
\[
\begin{split}
\|R_j f_q\|_\ell &= \|R_j \Delta_q \tilde{f}_q\|_\ell\\
& \leq  \|R_j \Delta_{q-1} f_q\|_\ell+\|R_j \Delta_q f_q\|_\ell+\|R_j \Delta_{q+1} f_q\|_\ell\\
& \lesssim\|f_q\|_\ell,
\end{split}
\]
for any $1\leq \ell\leq \infty$. Therefore,
\begin{equation} \label{eq:Riesz}
\|R^\perp f_q\|_\ell =\|(-R_2,R_1)f_q\|_\ell \lesssim \|f_q\|_\ell, \qquad 1\leq \ell \leq \infty.
\end{equation}

\subsection{Bony's paraproduct and commutator}
\label{sec-para}

Bony's paraproduct formula will be used to decompose the nonlinear term. First, note that
\[
u\cdot\nabla v =   \sum_{p}u_{\leq{p-2}}\cdot\nabla v_p + \sum_{p}u_{p}\cdot\nabla v_{\leq{p-2}} + \sum_{p}\sum_{|p-p'|\leq 1}u_p\cdot\nabla v_{p'}.
\]
Due to \eqref{eq-xi} we have $\varphi(\xi)=0$ when $|\xi|\leq 3/4$ or $|\xi|\geq 2$, and hence 
\[
(f_q  g_{\leq q-2})_{\geq q+2}=0, \qquad (f_q  g_{\leq q-2})_{\leq q-3}=0, \qquad (f_qg_{q+1})_{\geq q+3} =0,
\]
for tempered distributions $f$ and $g$. Therefore,
\begin{equation}\notag
\begin{split}
\Delta_q(u\cdot\nabla v)=&\sum_{q-1\leq p \leq q+2}\Delta_q(u_{\leq{p-2}}\cdot\nabla v_p)+
\sum_{q-1\leq p \leq q+2}\Delta_q(u_{p}\cdot\nabla v_{\leq{p-2}})\\
&+\sum_{p\geq q-2}\sum_{\substack{|p-p'|\leq 1\\ p' \geq q-2}}\Delta_q(u_p\cdot\nabla v_{p'}).
\end{split}
\end{equation}
It is usually sufficient to use a weaker form of this formula:
\begin{equation}\notag
\begin{split}
\Delta_q(u\cdot\nabla v)=&\sum_{|q-p|\leq 2}\Delta_q(u_{\leq{p-2}}\cdot\nabla v_p)+
\sum_{|q-p|\leq 2}\Delta_q(u_{p}\cdot\nabla v_{\leq{p-2}})\\
&+\sum_{p\geq q-2} \Delta_q(\tilde u_p \cdot\nabla v_p).
\end{split}
\end{equation}


We will also use the following  commutator notation
\begin{equation} \label{eq:CommutatodDef}
[\Delta_q, u_{\leq{p-2}}\cdot\nabla]v_p:=\Delta_q(u_{\leq{p-2}}\cdot\nabla v_p)-u_{\leq{p-2}}\cdot\nabla \Delta_qv_p.
\end{equation}
By definition of $\Delta_q$, for divergence free $u$ we have 
\begin{equation}\notag
\begin{split}
\left|[\Delta_q,u_{\leq{p-2}}\cdot\nabla] v_p\right|=&\left|\int_{\T^2}h_q(x-y)\left(u_{\leq p-2}(y)-u_{\leq p-2}(x)\right)\cdot \nabla v_p(y)\,dy\right|\\
=&\left|\int_{\T^2} \left(u_{\leq p-2}(y)-u_{\leq p-2}(x)\right)\cdot \nabla h_q(x-y) v_p(y)\,dy\right|\\
=&\left|\int_{\T^2}\frac{u_{\leq p-2}(y)-u_{\leq p-2}(x)}{d(y,x)}\cdot \nabla h_q(x-y)d(y,x) v_p(y)\,dy\right|,
\end{split}
\end{equation}
where we used integration by parts and the fact that $\div\, u_{\leq p-2}=0$. As before, $d(\cdot,\cdot)$ denotes the distance on $\mathbb{T}^2$.
Thus, by Young's inequality and \eqref{eq:qwer},
\begin{equation}\label{commu}
\begin{split}
\|[\Delta_q,u_{\leq{p-2}}\cdot\nabla] v_p\|_{\ell}
&\lesssim \|\nabla u_{\leq p-2}\|_\infty\|v_p\|_{\ell}\left|\int_{\T^2}d(0,x)|\nabla h_q(x)|\, dx\right|\\
&\lesssim \|\nabla u_{\leq p-2}\|_\infty\|v_p\|_{\ell},
\end{split}
\end{equation}
 for any $\ell>1$.

\bigskip

\section{Proof of the first part of Theorem \ref{thm}}
\label{sec:pf}



Now we are ready to prove our first main result, which holds for all weak solutions of the SQG equation, even the ones that might not satisfy the energy inequality.

\begin{Definition} \label{def:weaksolutions}
A weak solution to \eqref{QG} is a function $\th \in C_{\mathrm{w}}([0,T];L^2(\mathbb{T}^2))$ with zero spatial average that satisfies \eqref{QG} in a distributional sense.
That is, for any $\phi\in C_0^\infty(\mathbb T^2\times [0,T))$,
\begin{equation}\notag
-\int_0^T(\theta, \phi_t)dt-\int_0^T(u\theta, \nabla\phi)dt+\nu\int_0^T(\Lambda^{\frac{\alpha}{2}}\theta, \Lambda^{\frac{\alpha}{2}}\phi)dt
=(\theta_0, \phi(x,0))+\int_0^T(f, \phi)dt.
\end{equation}
\end{Definition}

Note that weak solutions satisfy this equality for weakly Lipschitz in time functions $\phi$ as well (see \cite{RRS}). This allows us to use Littlewood-Paley projections of $\theta$ as test functions.

\begin{Theorem}\label{thm-main-bound}
Let $\alpha \in (0,2)$, and $\theta_1(t)$, $\theta_2(t)$ be weak solutions of the SQG equation~\eqref{QG}. Let $\lb(t)=\max\{\lb_{\theta_1,r}(t), \lb_{\theta_2,r}(t)\}$
for some $r \in \mathcal{I}_\alpha$. Let
\begin{equation} \label{eq:defofl}
\ell=\left\{
\begin{split}
&\frac{\alpha(r+1)}{2}, \qquad 0<\alpha\leq 1,\\
&\frac{2\alpha}{\alpha-1}, \qquad  1<\alpha< 2.
\end{split}
\right.
\end{equation}
If
\begin{equation} \label{eq:dm-condition}
\theta_1(t)_{\leq \lb(t)}=\theta_2(t)_{\leq \lb(t)}, \qquad \forall t\in(T_1,T_2),
\end{equation}
then
\begin{equation} \label{eq:boundonldif-expon}
\|\theta_1(t) - \theta_2(t)\|_{B^0_{\ell,\ell}}^\ell \leq \|\theta_1(t_0) - \theta_2(t_0)\|_{B^0_{\ell,\ell}}^\ell e^{-\frac {c\nu}{L^\alpha} (t-t_0)}, \quad \forall t_0, t\in [T_1, T_2], \quad t_0 \leq t,
\end{equation}
where $c$ is an absolute constant.
\end{Theorem}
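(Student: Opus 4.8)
The plan is to derive a differential inequality for $w = \theta_1 - \theta_2$ in the Besov norm $B^0_{l,l}$, exploiting that by hypothesis $w$ is supported (in frequency) above $\lb(t)$. Write the equation for $w$: since $u_i = R^\perp\theta_i$, setting $v = u_1 - u_2 = R^\perp w$ we have
\[
\partial_t w + u_1\cdot\nabla w + v\cdot\nabla\theta_2 + \nu\Lambda^\alpha w = 0.
\]
Apply $\Delta_q$, multiply by $|w_q|^{l-2}w_q$ and integrate. The dissipative term is controlled from below by Lemma~\ref{lp}, giving a gain of $\lambda_q^\alpha\|w_q\|_l^l$ with a good constant. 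Summing over $q$ and using $\lambda_q \geq \lb(t) \geq \lambda_{-1} = 1/L$ on the support of $w$, the dissipation produces a term of size $\nu L^{-\alpha}\|w\|_{B^0_{l,l}}^l$ on the negative side; the whole game is to absorb the nonlinear contributions into this term.

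For the nonlinear terms I would use Bony's paraproduct decomposition as set up in Section~\ref{sec-para}, splitting $\Delta_q(u_1\cdot\nabla w)$ and $\Delta_q(v\cdot\nabla\theta_2)$ into low-high (transport, handled by the commutator estimate \eqref{commu}), high-low, and high-high pieces. After the commutator is extracted, each remaining piece is estimated in $\|\cdot\|_l$ by a product of a Littlewood-Paley block of $w$ (or $v$, which in $L^l$ is comparable to $w$ since $R^\perp$ is bounded on homogeneous blocks) with either $\|\nabla (u_1)_{\le p-2}\|_\infty \sim \lambda_p\|(\theta_1)_p\|_\infty$-type factors — summed against the second condition in the definition of $\lb_{\theta_1,r}$ — or with $\lambda_p^{1-\alpha+2/r}\|(\theta_i)_p\|_r$-type factors, summed against the first condition. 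The key point is that because $w$ lives on frequencies $\gtrsim \lb$, every convolution-type sum over $p$ that appears is a sum of the form $\sum_{p\gtrsim q, p\gtrsim\lb} \lambda_p^{\ldots}\|(\theta_i)_p\|_{\ast}$, which by definition of $\lb_{\theta_i,r}$ is bounded by $c_{\alpha,r}\nu$; hence every nonlinear term is bounded by $c_{\alpha,r}\nu$ times a Besov-type quantity controlled by $\sum_q\lambda_q^\alpha\|w_q\|_l^l$. Choosing $c_{\alpha,r}$ small enough (which is exactly how it was defined — note the powers $\alpha(r+1)/2$ and $2\alpha/(\alpha-1)$ matching $l$, and the $\alpha$-dependent prefactors matching the constants lost in Lemma~\ref{lp} and in the Schur/Young estimates for the geometric sums in $q$) lets the nonlinearity be absorbed into half the dissipation, leaving
\[
\frac{d}{dt}\|w\|_{B^0_{l,l}}^l \le -c\,\nu\sum_q \lambda_q^\alpha\|w_q\|_l^l \le -\frac{c\nu}{L^\alpha}\|w\|_{B^0_{l,l}}^l,
\]
and Gr\"onwall gives \eqref{eq:boundonldif-expon}.

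The main obstacle, I expect, is the bookkeeping of the geometric sums over $p$ and $q$ and verifying that the constants close. Two points need care. First, in the high-high ("remainder") terms one transfers derivatives onto the low-frequency block via integration by parts using $\div v = \div(R^\perp w) = 0$, producing $\tilde u_p\cdot\nabla w_p$-type expressions; the frequency $\lambda_q \le \lambda_{p+C}$ restriction there means the $q$-sum is $\ell^1$-summable only after one uses H\"older in the exponent pair $(l, l/(l-1))$ carefully, and this is where the restriction $r \in \mathcal{I}_\alpha$ (equivalently $l$ in a bounded range when $\alpha>1$, or $l > 2$ when $\alpha \le 1$) enters — the endpoints are precisely where the Schur test for the $q$-summation or the Bernstein step $\|w_p\|_\infty \lesssim \lambda_p^{2/l}\|w_p\|_l$ degenerates. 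Second, the factor $(1 - 2^{2/(r+1)-2/r})^{\ldots}$ and its subcritical analogue in $c_{\alpha,r}$ are exactly the constants produced by summing $\sum_{p} 2^{-\epsilon(p-q)}$-type tails with $\epsilon \to 0$ as $r\to$ endpoint, so I would track those explicitly to confirm the stated $c_{\alpha,r}$ works; everything else is routine paraproduct estimation. One should also note that the argument never uses an energy inequality — $\|w\|_{B^0_{l,l}}$ is finite for a.e.\ $t$ by $w\in C_w([0,T];L^2)$ together with the frequency localization making only finitely-decaying high modes matter, and the differential inequality is interpreted in the integrated sense — which is why the theorem applies to all weak solutions.
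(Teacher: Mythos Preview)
Your proposal is correct and follows essentially the same route as the paper: derive the $w$-equation, test against $l\,w_q|w_q|^{l-2}$, use Lemma~\ref{lp} for the dissipation, decompose both nonlinear terms via Bony's paraproduct (with the commutator extracted from the low-high transport piece of $u_1\cdot\nabla w$), bound each piece by $c_{\alpha,r}\nu\sum_q\lambda_q^\alpha\|w_q\|_l^l$ using the two conditions in the definition of $\lb_{\theta_i,r}$, and close by Gr\"onwall in integrated form. One small correction: in the high-high pieces the integration by parts moves $\nabla$ onto the test function $w_q|w_q|^{l-2}$ (producing a factor $\lambda_q$ and an extra factor of $l$), not onto a low-frequency block; the divergence-free condition is used elsewhere, namely to kill the pure transport term after the commutator is split off.
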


\begin{proof}
Denote $u_1=R^\perp\theta_1$ and $u_2=R^\perp\theta_2$.
Let $w=\theta_1-\theta_2$, which satisfies the equation
\begin{equation} \label{eq-w}
w_t+u_1\cdot\nabla w+\nu\Lambda^\alpha w+R^\perp w\cdot\nabla \theta_2=0
\end{equation}
in the sense of distributions. By our assumption, $w_{\leq \lb(t)} = 0$ for $t\in(T_1,T_2)$.
Recall that
\begin{equation}\label{para-r}
\mathcal{I}_\alpha =  
\left\{
\begin{split}
&\textstyle \left(\frac{4}{\alpha}-1, \infty\right),&\quad 0<\alpha\leq 1,\\
&\textstyle\left(\frac{2\alpha}{\alpha-1}, \frac{4}{\alpha-1}\right),&\quad  1<\alpha< 2.
\end{split}
\right.
\end{equation}
Combining $r \in \mathcal{I}_\alpha$ with \eqref{eq:defofl} one can verify that the conditions
\begin{equation}\notag
2 \leq \ell \leq r<\frac{2\ell}\alpha,\qquad
\frac2r+\frac\alpha \ell>\alpha-1, \qquad 1+\frac 2r-\frac\alpha \ell>0
\end{equation}
are satisfied. These inequalities will be used throughout the proof.

First, we project equation (\ref{eq-w}) onto the $q$-th shell, multiply it by $\ell w_q|w_q|^{\ell-2}$, and integrate, which is equivalent to using $\Delta_q(\ell w_q|w_q|^{\ell-2})$ as
a test function (see the remark after Definition~\eqref{def:weaksolutions}).   Adding up for all $q\geq -1$ and applying Lemma \ref{lp}, yields
\begin{equation}\label{w2}
\begin{split}
\|w(t)\|_{B^0_{\ell,\ell}}^\ell - \|w(t_0)\|_{B^0_{\ell,\ell}}^\ell + C \nu\int_{t_0}^t \|\Lambda^{\alpha/\ell} w\|_{B^0_{\ell,\ell}}^\ell\, d\tau \leq &\\
 -\int_{t_0}^t\ell\sum_{q\geq -1}\int_{\T^2}\Delta_q(R^\perp w\cdot\nabla \theta_2) & w_q|w_q|^{\ell-2}\, dx\, d\tau\\
-\int_{t_0}^t \ell\sum_{q\geq -1}\int_{\T^2}\Delta_q(u_1 \cdot\nabla w) & w_q|w_q|^{\ell-2}\, dx \, d\tau\\
=&\int_{t_0}^t I \, d\tau+ \int_{t_0}^t J \, d\tau,
\end{split}
\end{equation}
for all $T_1 \leq t_0 \leq t \leq T_2$.
Using Bony's paraproduct mentioned in Subsection \ref{sec-para}, $I$ is decomposed as
\begin{equation}\notag
\begin{split}
I=&
-\ell\sum_{q\geq -1}\sum_{|q-p|\leq 2}\int_{\T^2}\Delta_q(R^\perp w_{\leq{p-2}}\cdot\nabla(\theta_2)_p) w_q|w_q|^{\ell-2}\, dx\\
&-\ell\sum_{q\geq -1}\sum_{|q-p|\leq 2}\int_{\T^2}\Delta_q(R^\perp w_{p}\cdot\nabla (\theta_2)_{\leq{p-2}})w_q|w_q|^{\ell-2}\, dx\\
&-\ell\sum_{q\geq -1}\sum_{p\geq q-2}\int_{\T^2}\Delta_q(R^\perp\tilde w_p\cdot\nabla (\theta_2)_p)w_q|w_q|^{\ell-2}\, dx\\
 =& I_{1}+I_{2}+I_{3}.
\end{split}
\end{equation}
These terms are estimated as follows. First, recall $w_{\leq \lb(t)}=0$. Let $Q(t)$ be such that $\lb(t)=2^{Q(t)}/L$. Since $r \geq \ell$, we can choose $m$ so that 
$\frac1r+\frac1m+\frac{\ell-1}{\ell}=1$. Changing the order of summations and using H\"older's inequality, we infer
\begin{equation}\notag
\begin{split}
|I_{1}|&\leq \ell\sum_{q> Q}\sum_{\substack{|q-p|\leq 2 \\ p>Q+2}}\int_{\T^2}\left|\Delta_q(R^\perp w_{\leq{p-2}}\cdot\nabla(\theta_2)_p) w_q\right||w_q|^{\ell-2}\, dx\\
&= \ell\sum_{p> Q+2}\sum_{\substack{|q-p|\leq 2\\q>Q}}\int_{\T^2}\left|\Delta_q(R^\perp w_{\leq{p-2}}\cdot\nabla(\theta_2)_p) w_q\right||w_q|^{\ell-2}\, dx\\
&\lesssim \ell\sum_{p> Q+2}\lambda_p\|(\theta_2)_p\|_r\sum_{\substack{|q-p|\leq 2\\q>Q}}\| w_q\|_{\ell}^{\ell-1}\sum_{p'\leq p-2}\|R^\perp w_{p'}\|_m.\\
\end{split}
\end{equation}
 Then using the definition of $\lb_{\theta,r}$ and 
Young's inequality, we obtain 
\begin{equation}\notag
\begin{split}
|I_{1}|
&\lesssim c_{\alpha,r}\nu \ell\sum_{p> Q+2}\lambda_p^{\alpha-\frac 2r}\sum_{\substack{|q-p|\leq 2\\q>Q}}\| w_q\|_{\ell}^{\ell-1}\sum_{p'\leq p-2}\lambda_{p'}^{\frac2{\ell}-\frac2m}\|R^\perp w_{p'}\|_\ell\\
&\lesssim c_{\alpha,r}\nu \ell\sum_{p> Q}\lambda_p^{\alpha-\frac 2r}\| w_p\|_{\ell}^{\ell-1}\sum_{p'\leq p-2}\lambda_{p'}^{\frac2{\ell}-\frac2m}\|R^\perp w_{p'}\|_\ell\\
&\lesssim c_{\alpha,r}\nu \ell\sum_{p> Q}\lambda_p^{\frac{\alpha(\ell-1)}{\ell}}\| w_p\|_{\ell}^{\ell-1}\sum_{p'\leq p-2}\lambda_{p'}^{\frac \alpha \ell}\|R^\perp w_{p'}\|_{\ell}\lambda_{p-p'}^{\frac \alpha {\ell}-\frac2r}\\
&\lesssim c_{\alpha,r}\nu \ell\sum_{p> Q}\lambda_p^\alpha\|w_p\|_{\ell}^\ell+c_{\alpha,r}\nu \ell\sum_{p> Q}\left(\sum_{p'\leq p-2}\lambda_{p'}^{\frac \alpha {\ell}}\|R^\perp w_{p'}\|_{\ell}\lambda_{p-p'}^{\frac \alpha {\ell}-\frac2r}\right)^\ell.
\end{split}
\end{equation}
It then follows from Jensen's (or H\"older's) inequality, \eqref{eq:Riesz}, and the fact that $r<(2\ell)/\alpha$, 
\begin{equation}
\begin{split}
|I_{1}|
&\lesssim c_{\alpha,r}\nu \ell\sum_{p> Q}\lambda_p^\alpha\|w_p\|_{\ell}^\ell+c_{\alpha,r}\nu \ell\sum_{p'>Q}\lambda_{p'}^{\alpha}\|R^\perp w_{p'}\|_{\ell}^\ell\left(\sum_{p\geq p'-2}\lambda_{p-p'}^{\frac \alpha {\ell}-\frac2r}\right)^l\\
&\lesssim c_{\alpha,r}\nu \ell\sum_{p> Q}\lambda_p^\alpha\|w_p\|_{\ell}^\ell+c_{\alpha,r}\nu \ell\left(1-2^{\frac\alpha {\ell}-\frac2r}\right)^{-l}\sum_{q>Q}\lambda_q^\alpha\|w_q\|_{\ell}^\ell\\
&\lesssim c_{\alpha,r}\nu \ell\left(1-2^{\frac\alpha {\ell}-\frac2r}\right)^{-l}\sum_{q>Q}\lambda_q^\alpha\|w_q\|_{\ell}^\ell,\\
\end{split}
\end{equation}
where we needed $r<(2\ell)/\alpha$ in order to apply Jensen's inequality at the first step. 
For $I_2$ we first change the order of summations and decompose it into two parts:
\[
\begin{split}
|I_{2}|\leq &\ell\sum_{q>Q}\sum_{\substack{|q-p|\leq 2\\ p>Q}}\int_{\T^2}\left|\Delta_q(R^\perp w_{p}\cdot\nabla (\theta_2)_{\leq{p-2}})w_q\right| |w_q|^{\ell-2}\, dx\\
\leq &\ell\sum_{Q<p\leq Q+2}\sum_{\substack{|q-p|\leq 2\\ q>Q}}\int_{\T^2}\left|\Delta_q(R^\perp w_p\cdot\nabla (\theta_2)_{\leq{p-2}})w_q\right| |w_q|^{\ell-2}\, dx\\
&+\ell\sum_{p>Q+2}\sum_{\substack{|q-p|\leq 2\\ q>Q}}\int_{\T^2}\left|\Delta_q(R^\perp w_p\cdot\nabla (\theta_2)_{\leq{Q}})w_q\right| |w_q|^{\ell-2}\, dx\\
&+ \ell\sum_{p> Q+2}\sum_{\substack{|q-p|\leq 2\\q>Q}}\int_{\T^2}\left|\Delta_q(R^\perp w_{p}\cdot\nabla (\theta_2)_{(Q,p-2]})w_q\right| |w_q|^{\ell-2}\, dx\\
=& I_{21}+I_{21}'+I_{22}.
\end{split}
\]
Notice that $p-2\leq Q+2-2=Q$ in $I_{21}$, which implies that $I_{21}$ and $I_{21}'$ share the same estimate.
Using H\"older's inequality, the definition of $\lb_{\theta,r}$, and  Young's inequality for the first term, we obtain 
\[
\begin{split}
I_{21}'&\lesssim \ell\sum_{p> Q}\sum_{\substack{|p-q|\leq2\\q>Q}}\|\nabla(\theta_2)_{\leq Q}\|_\infty\|R^\perp w_p\|_{\ell}\|w_q\|_{\ell}^{\ell-1}\\
&\lesssim  c_{\alpha,r} \nu \ell\sum_{p>Q}\lambda_Q^\alpha \|R^\perp w_p\|_{\ell}\sum_{\substack{|p-q|\leq2\\q>Q}}\|w_q\|_{\ell}^{\ell-1}\\
&\lesssim c_{\alpha,r} \nu \ell\sum_{q>Q}\lambda_q^\alpha \| w_q\|_{\ell}^\ell.
\end{split}
\]
To estimate $I_{22}$, we first use H\"older's inequality, change the order of summations, use Bernstein's inequality 
\[
\begin{split}
I_{22}&\lesssim \ell\sum_{p> Q+2}\sum_{\substack{|p-q|\leq2\\q>Q}}\sum_{Q<p'\leq p-2}\|\nabla(\theta_2)_{p'}\|_\infty \|R^\perp w_p\|_{\ell}\|w_q\|_{\ell}^{\ell-1}\\
&\lesssim \ell\sum_{p'> Q}\|\nabla(\theta_2)_{p'}\|_\infty\sum_{p\geq p'+2}\|R^\perp w_p\|_{\ell}\sum_{\substack{|p-q|\leq2\\q>Q}} \|w_q\|_{\ell}^{\ell-1}\\
&\lesssim \ell\sum_{p'> Q}\lambda_{p'}^{1+\frac2r}\|(\theta_2)_{p'}\|_r\sum_{p\geq p'+2}\|R^\perp w_p\|_{\ell}\sum_{\substack{|p-q|\leq2\\q>Q}} \|w_q\|_{\ell}^{\ell-1},
\end{split}
\]
and then the definition of $\lb_{\theta,r}$ and Young's inequality to infer
\[
\begin{split}
I_{22} &\lesssim c_{\alpha,r}\nu \ell\sum_{p'> Q}\lambda_{p'}^\alpha\sum_{p\geq p'+2}\|R^\perp w_p\|_{\ell}\sum_{\substack{|p-q|\leq2\\q>Q}} \|w_q\|_{\ell}^{\ell-1}\\
&\lesssim c_{\alpha,r}\nu \ell\sum_{q> Q}\lambda_{q}^\alpha \|w_q\|_{\ell}^{\ell}.
\end{split}
\]
Since $r \geq \ell\geq 2$, we can choose $m$ so that $\frac1r+\frac1m+\frac{1}{\ell}=1$. To estimate $I_{3}$ we first integrate by parts, change the order of summations, and
use H\"older's inequality:
\[
\begin{split}
|I_{3}|&\leq \ell\sum_{q>Q}\sum_{\substack{p\geq q-2\\p>Q-1}}\int_{\mathbb T^2}|\Delta_q(R^\perp\tilde w_p (\theta_2)_p)\cdot \nabla (w_q|w_q|^{\ell-2})| \, dx\\
&\lesssim \ell^2\sum_{q>Q}\sum_{\substack{p\geq q-2\\p>Q-1}}\int_{\mathbb T^2}|\Delta_q(R^\perp\tilde w_p (\theta_2)_p) \cdot \nabla w_q||w_q|^{\ell-2} \, dx\\
&\leq \ell^2\sum_{p>Q}\sum_{Q<q\leq p+2}\int_{\mathbb T^2}\left|\Delta_q(R^\perp\tilde w_p (\theta_2)_p) \cdot \nabla w_q\right| |w_q|^{\ell-2} \, dx\\
&+\ell^2\sum_{Q<q\leq Q+2}\int_{\mathbb T^2}\left|\Delta_q(R^\perp\tilde w_Q (\theta_2)_Q) \cdot \nabla w_q\right| |w_q|^{\ell-2} \, dx\\
&\lesssim \ell^2\sum_{p> Q}\|R^\perp\tilde w_p\|_{\ell}\|(\theta_2)_p\|_r\sum_{Q<q\leq p+2}\lambda_q\|w_q\|_{(\ell-1)m}^{\ell-1}\\
&+\ell^2\|(\theta_2)_Q\|_\infty\|R^\perp\tilde w_Q\|_{\ell}\sum_{Q<q\leq Q+2}\lambda_q\|w_q\|_{\ell}^{\ell-1}.
\end{split}
\]
Then using the definition of $\lb_{\theta_2,r}$, the fact $w_{\leq Q}=0$ which implies $\tilde w_Q=w_{Q+1}$, Jensen's inequality, and \eqref{eq:Riesz}, we get
\[
\begin{split}
|I_{3}|&\lesssim c_{\alpha,r}\nu \ell^2\sum_{p> Q}\lambda_p^{\alpha-1-\frac2r}\|R^\perp w_p\|_{\ell}\sum_{Q<q\leq p+2}\lambda_q^{1+(\ell-1)[\frac2{\ell}-\frac{2}{(\ell-1)m}]}\|w_q\|_{\ell}^{\ell-1}\\
&+ c_{\alpha,r}\nu \ell^2\lambda_Q^{\alpha-1}\|R^\perp\tilde w_Q\|_{\ell}\sum_{Q<q\leq Q+2}\lambda_q\|w_q\|_{\ell}^{\ell-1}\\
&\lesssim c_{\alpha,r}\nu \ell^2\sum_{p> Q}\lambda_p^{\frac\alpha \ell}\|w_p\|_\ell\sum_{Q<q\leq p+2}\lambda_q^{\frac{\alpha(\ell-1)}{\ell}}\|w_q\|_\ell^{\ell-1}\lambda_{q-p}^{1-\alpha+\frac 2r+\frac\alpha \ell}\\
&+ c_{\alpha,r}\nu \ell^2\sum_{Q< q\leq Q+2}\lambda_q^\alpha\|w_q\|_\ell^\ell\\
&\lesssim c_{\alpha,r}\nu \ell^2\left(1-2^{\alpha-1-\frac \alpha \ell-\frac2r}\right)^{-\frac{\ell}{\ell-1}}\sum_{q> Q}\lambda_q^\alpha\|w_q\|_\ell^\ell,
\end{split}
\]
where we used  $\frac2r+\frac\alpha \ell>\alpha-1$ in order to apply Jensen's inequality. Therefore, we have
\begin{equation}\label{est-i1}
\begin{split}
|I| &\lesssim c_{\alpha,r}\nu \ell^2\left[\left(1-2^{\frac\alpha \ell-\frac2r}\right)^{-\ell}+\left(1-2^{\alpha-1-\frac \alpha \ell-\frac2r}\right)^{-\frac{\ell}{\ell-1}}\right]\sum_{q> Q}\lambda_q^\alpha\|w_q\|_\ell^\ell\\
&\lesssim c_{\alpha,r}\nu \ell^2\left(1-2^{\frac\alpha \ell-\frac2r}\right)^{-\ell}\sum_{q> Q}\lambda_q^\alpha\|w_q\|_\ell^\ell
\end{split}
\end{equation}
due to the choice of the parameters $\ell$ and $r$ as in \eqref{eq:defofl} and \eqref{para-r}.

To estimate $J$, we start with Bony's paraproduct formula
\begin{equation}\notag
\begin{split}
J=
&-\ell\sum_{q\geq -1}\sum_{|q-p|\leq 2}\int_{\T^2}\Delta_q((u_1)_{\leq{p-2}}\cdot\nabla w_p) w_q|w_q|^{\ell-2}\, dx\\
&-\ell\sum_{q\geq -1}\sum_{|q-p|\leq 2}\int_{\T^2}\Delta_q((u_1)_{p}\cdot\nabla w_{\leq{p-2}})w_q|w_q|^{\ell-2}\, dx\\
&-\ell\sum_{q\geq -1}\sum_{p\geq q-2}\int_{\T^2}\Delta_q((u_1)_p\cdot\nabla\tilde w_p)w_q|w_q|^{\ell-2}\, dx\\
=&J_{1}+J_{2}+J_{3}.
\end{split}
\end{equation}
Using the commutator notation \eqref{eq:CommutatodDef}, $J_{1}$ can be decomposed as 
\begin{equation}\notag
\begin{split}
J_{1}=&-\ell\sum_{q\geq -1}\sum_{|q-p|\leq 2}\int_{\T^2}[\Delta_q, (u_1)_{\leq{p-2}}\cdot\nabla] w_pw_q|w_q|^{\ell-2}\, dx\\
&-\ell\sum_{q\geq -1}\int_{\T^2} (u_1)_{\leq q-2}\cdot\nabla w_q w_q|w_q|^{\ell-2}\, dx\\
&-\ell\sum_{q\geq -1}\sum_{|q-p|\leq 2}\int_{\T^2}( (u_1)_{\leq{p-2}}- (u_1)_{\leq q-2})\cdot\nabla\Delta_qw_p w_q|w_q|^{\ell-2}\, dx\\
=&J_{11}+J_{12}+J_{13},
\end{split}
\end{equation}
where we used the fact that $\sum_{|p-q|\leq 2}\Delta_qw_p=w_q$, see (\ref{near-sum}).  Notice that we have $J_{12}=0$, since $\div\, (u_1)_{\leq q-2}=0$. 
Thanks to \eqref{commu},   
\begin{equation}\notag
\|[\Delta_q, (u_1)_{\leq{p-2}}\cdot\nabla] w_p\|_\ell\\
\lesssim \|\nabla  (u_1)_{\leq p-2}\|_\infty\|w_p\|_\ell.
\end{equation}
Thus, the term $J_{11}$ can be estimated as 
\begin{equation}\notag
\begin{split}
|J_{11}|
&\leq  \ell\sum_{q> Q} \sum_{\substack{|q-p|\leq 2\\ p>Q}}\int_{\mathbb T^2}\left| [\Delta_q, (u_1)_{\leq{p-2}}\cdot\nabla] w_p\right||w_q|^{\ell-1} \, dx\\
&\leq \ell\sum_{q> Q}\sum_{\substack{|q-p|\leq 2\\ p>Q}}\|[\Delta_q, (u_1)_{\leq{q}}\cdot\nabla] w_p\|_\ell\|w_q\|_\ell^{\ell-1}\\
&\leq \ell\sum_{q> Q}\sum_{\substack{|q-p|\leq 2\\ p>Q}}\|\nabla (u_1)_{(Q,q]}\|_\infty\|w_p\|_\ell\|w_q\|_\ell^{\ell-1}\\
&+\ell\sum_{q> Q}\sum_{\substack{|q-p|\leq 2\\ p>Q}}\|\nabla (u_1)_{\leq Q}\|_\infty\|w_p\|_\ell\|w_q\|_\ell^{\ell-1}\\
&= J_{111}+J_{112}.
\end{split}
\end{equation}
For the first term we use H\"older's and Bernstein's inequalities, 
\begin{equation}\notag
\begin{split}
J_{111}&\lesssim \ell\sum_{q> Q}\sum_{\substack{|q-p|\leq 2\\ p>Q}}\sum_{Q<p'\leq q}\lambda_{p'}\|(u_1)_{p'}\|_\infty\|w_p\|_\ell\|w_q\|_\ell^{\ell-1}\\
&\lesssim \ell\sum_{q> Q}\sum_{\substack{|q-p|\leq 2\\ p>Q}}\sum_{Q<p'\leq q}\lambda_{p'}^{1+\frac2r}\|(u_1)_{p'}\|_r\|w_p\|_\ell\|w_q\|_\ell^{\ell-1},
\end{split}
\end{equation}
and then the fact that $\| u_1\|_r\lesssim \|\theta_1\|_r$, and the definition of $\lb_{\theta_1,r}$ to get
\[
\begin{split}
J_{111}
&\lesssim c_{\alpha,r}\nu \ell\sum_{q> Q}\sum_{\substack{|q-p|\leq 2\\ p>Q}}\|w_p\|_\ell\|w_q\|_\ell^{\ell-1}\sum_{Q<p'\leq q}\lambda_{p'}^{\alpha}\\
&\lesssim c_{\alpha,r}\nu \ell\sum_{q> Q}\|w_q\|_\ell^{\ell}\sum_{Q<p'\leq q}\lambda_{p'}^{\alpha}\\
&\lesssim c_{\alpha,r}\nu \ell\sum_{q> Q}\lambda_q^{\alpha}\|w_q\|_\ell^{\ell}\sum_{Q<p'\leq q}\lambda_{p'-q}^{\alpha}\\
&\lesssim c_{\alpha,r}\nu \ell\sum_{q>Q}\lambda_q^{\alpha}\|w_q\|_\ell^{\ell}.
\end{split}
\]
The second term is estimated in a similar way:
\begin{equation}\notag
\begin{split}
J_{112} &\lesssim \ell\sum_{q> Q}\sum_{\substack{|q-p|\leq 2\\ p>Q}}\sum_{p'\leq Q}\lambda_{p'}\| (u_1)_{p'}\|_\infty\|w_p\|_\ell\|w_q\|_\ell^{\ell-1}\\
&\lesssim \ell\sum_{q> Q}\sum_{\substack{|q-p|\leq 2\\ p>Q}}\sum_{p'\leq Q}\lambda_{p'}\| (\theta_1)_{p'}\|_\infty\|w_p\|_\ell\|w_q\|_\ell^{\ell-1}\\
&\lesssim  c_{\alpha,r}\nu \ell\sum_{q> Q}\sum_{\substack{|q-p|\leq 2\\ p>Q}}\lambda_{Q}^\alpha\|w_p\|_\ell\|w_q\|_\ell^{\ell-1}\\
&\lesssim  c_{\alpha,r}\nu \ell\sum_{q>Q}\lambda_q^{\alpha}\|w_q\|_\ell^\ell.
\end{split}
\end{equation}
To estimate $J_{13}$, we start with splitting the summation
\begin{equation}\notag
\begin{split}
|J_{13}|&\leq \ell\sum_{q>Q}\sum_{\substack{|q-p|\leq 2\\p>Q}}\int_{\T^2}\left|( (u_1)_{\leq{p-2}}- (u_1)_{\leq q-2})\cdot\nabla\Delta_qw_p \right||w_q|^{\ell-1}\, dx\\
&\lesssim \ell\sum_{q>Q}\sum_{\substack{|q-p|\leq 2\\p>Q}}\sum_{q-3\leq p'\leq q}\int_{\T^2}|(u_1)_{p'}||\nabla\Delta_qw_p ||w_q|^{\ell-1}\, dx\\
&\lesssim \ell\sum_{q>Q}\sum_{\substack{|q-p|\leq 2\\p>Q}}\sum_{q-3\leq p'\leq Q}\int_{\T^2}|(u_1)_{p'}||\nabla\Delta_qw_p ||w_q|^{\ell-1}\, dx\\
&+ \ell\sum_{q>Q}\sum_{\substack{|q-p|\leq 2\\p>Q}}\sum_{\substack{q-3\leq p'\leq q\\p'>Q}}\int_{\T^2}|(u_1)_{p'}||\nabla\Delta_qw_p ||w_q|^{\ell-1}\, dx\\
&= J_{131}+J_{132}.
\end{split}
\end{equation}
We use H\"older's inequality for the first term
\[
J_{131} \lesssim \ell\sum_{q>Q}\|w_q\|_\ell^{\ell-1}\sum_{\substack{|q-p|\leq 2\\p>Q}}\lambda_p\|w_p\|_\ell\sum_{q-3\leq p'\leq Q}\|(u_1)_{p'}\|_\infty,
\]
followed  by the definition of $\lb_{\theta_1,r}$, H\"older's inequality, Young's inequality, and the fact $\alpha>0$
\begin{equation}\notag
\begin{split}
J_{131} &\lesssim  c_{\alpha,r}\nu \ell\sum_{q>Q}\lambda_Q^{\alpha}\lambda_q^{-1}\|w_q\|_\ell^{\ell-1}\sum_{\substack{|q-p|\leq 2\\p>Q}}\lambda_p\|w_p\|_\ell\\
&\lesssim  c_{\alpha,r}\nu \ell\sum_{q>Q}\lambda_q^{\frac{\alpha(\ell-1)}{\ell}}\|w_q\|_\ell^{\ell-1}\sum_{\substack{|q-p|\leq 2\\p>Q}}\lambda_p^{\frac{\alpha}{\ell}}\|w_p\|_\ell\lambda_{p-q}^{1-\frac{\alpha}{\ell}}\lambda_{Q-q}^\alpha\\
&\lesssim  c_{\alpha,r}\nu \ell\sum_{q>Q}\lambda_q^{\frac{\alpha(\ell-1)}{\ell}}\|w_q\|_\ell^{\ell-1}\sum_{\substack{|q-p|\leq 2\\p>Q}}\lambda_p^{\frac{\alpha}{\ell}}\|w_p\|_\ell\\
&\lesssim  c_{\alpha,r}\nu \ell\sum_{q>Q}\lambda_Q^{\alpha}\|w_q\|_\ell^{\ell}+ c_{\alpha,r}\nu \ell \sum_{q>Q}\sum_{\substack{|q-p|\leq 2\\p>Q}}\lambda_p^{\alpha}\|w_p\|_\ell^\ell\\
&\lesssim  c_{\alpha,r}\nu \ell\sum_{q>Q}\lambda_q^{\alpha}\|w_q\|_\ell^{\ell}.
\end{split}
\end{equation}
Since $r\geq \ell$ we can choose $m$ so that  $\frac1m+\frac1r+\frac{\ell-1}\ell=1$, and estimate the second term as
\begin{equation}\notag
\begin{split}
J_{132} &\lesssim \ell\sum_{q>Q}\|w_q\|_\ell^{\ell-1}\sum_{\substack{|q-p|\leq 2\\p>Q}}\lambda_p\|w_p\|_m\sum_{\substack{q-3\leq p'\leq q\\p'>Q}}\|(u_1)_{p'}\|_r\\
&\lesssim c_{\alpha,r}\nu \ell\sum_{q>Q}\|w_q\|_\ell^{\ell-1}\sum_{\substack{|q-p|\leq 2\\p>Q}}\lambda_p^{1+\frac2\ell-\frac2m}\|w_p\|_\ell\sum_{\substack{q-3\leq p'\leq q\\p'>Q}}\lambda_{p'}^{\alpha-1-\frac2r}\\
&\lesssim c_{\alpha,r}\nu \ell\sum_{q>Q}\lambda_q^{1+\frac2\ell-\frac2m}\|w_q\|_\ell^{\ell}\sum_{\substack{q-3\leq p'\leq q\\p'>Q}}\lambda_{p'}^{\alpha-1-\frac2r}\\
&\lesssim c_{\alpha,r}\nu \ell\sum_{q>Q}\lambda_q^{\alpha}\|w_q\|_\ell^{\ell}\sum_{\substack{q-3\leq p'\leq q\\p'>Q}}\lambda_{p'-q}^{\alpha-1-\frac2r}\\
&\lesssim c_{\alpha,r}\nu \ell\sum_{q> Q}\lambda_q^{\alpha}\|w_q\|_\ell^\ell.
\end{split}
\end{equation}
Again choosing $m$ such that  $\frac1r+\frac1m+\frac{\ell-1}{\ell}=1$ and using H\"older's inequality, we obtain
\begin{equation}\notag
\begin{split}
|J_{2}| &\leq \ell\sum_{q>Q}\sum_{\substack{|q-p|\leq 2\\p>Q+2}}\int_{\T^2}\left|\Delta_q((u_1)_{p}\cdot\nabla w_{\leq{p-2}})\right||w_q|^{\ell-1}\, dx\\
&\leq \ell\sum_{p>Q+2}\sum_{\substack{|q-p|\leq 2\\q>Q}}\int_{\T^2}\left|\Delta_q((u_1)_{p}\cdot\nabla w_{\leq{q}})\right||w_q|^{\ell-1}\, dx\\
&\lesssim \ell\sum_{p>Q+2}\|(u_1)_p\|_r\sum_{\substack{|q-p|\leq 2\\q>Q}}\|w_q\|_\ell^{\ell-1}\sum_{p'\leq q}\lambda_{p'}\|w_{p'}\|_m.
\end{split}
\end{equation}
Now we use the definition of $\lb_{\theta_1,r}$ and Jensen's inequality to conclude that
\begin{equation}\notag
\begin{split}
|J_{2}|
&\lesssim c_{\alpha,r}\nu \ell\sum_{p>Q+2}\lambda_p^{\alpha-1-\frac 2r}\sum_{\substack{|q-p|\leq 2\\q>Q}}\|w_q\|_\ell^{\ell-1}\sum_{p'\leq q}\lambda_{p'}\|w_{p'}\|_m\\
&\lesssim c_{\alpha,r}\nu \ell \sum_{q>Q}\lambda_q^{\alpha-1-\frac 2r}\|w_q\|_\ell^{\ell-1}\sum_{p'\leq q}\lambda_{p'}^{1+\frac 2\ell-\frac2m}\|w_{p'}\|_\ell\\
&\lesssim c_{\alpha,r}\nu \ell \sum_{q>Q}\lambda_q^{\frac{\alpha(\ell-1)}{\ell}}\|w_q\|_\ell^{\ell-1}\sum_{p'\leq q}\lambda_{p'}^{\frac\alpha \ell}\|w_{p'}\|_\ell\lambda_{p'-q}^{1+\frac 2r-\frac\alpha \ell}\\
&\lesssim c_{\alpha,r}\nu \ell \left(1-2^{\frac\alpha \ell-1-\frac2r}\right)^{-l}\sum_{q>Q}\lambda_q^\alpha\|w_q\|_\ell^\ell,
\end{split}
\end{equation}
where we used $1+\frac 2r-\frac\alpha \ell>0$. Finally, observe that $J_{3}$ enjoys the same estimate as $I_{3}$ due to the fact that $\|(u_1)_q\|_{r_1}\lesssim \|(\theta_1)_q\|_{r_1}$ for any $r_1\in(1,\infty]$. Thus
\begin{equation}\label{est-i2}
\begin{split}
|J| &\lesssim c_{\alpha,r}\nu \ell^2\left[\left(1-2^{\frac\alpha \ell-1-\frac2r}\right)^{-l}+\left(1-2^{\alpha-1-\frac \alpha \ell-\frac2r}\right)^{-l}\right]\sum_{q> Q}\lambda_q^\alpha\|w_q\|_\ell^\ell\\
&\lesssim c_{\alpha,r}\nu \ell^2\left(1-2^{\frac\alpha \ell-\frac2r}\right)^{-l}\sum_{q> Q}\lambda_q^\alpha\|w_q\|_\ell^\ell
\end{split}
\end{equation}
due to \eqref{eq:defofl} and \eqref{para-r}. It is worth to mention that the coefficient $\left(1-2^{\frac\alpha \ell-\frac2r}\right)^{-l}$ blows up when $r$ approaches $\frac 4{\alpha-1}$ in the case $\alpha>1$ and $\ell=\frac{2\alpha}{\alpha-1}$.

Combining (\ref{w2})--(\ref{est-i2}) yields
\begin{equation}\notag
\|w(t)\|_{B^0_{\ell,\ell}}^\ell - \|w(t_0)\|_{B^0_{\ell,\ell}}^\ell \leq \int_{t_0}^t\left( - C \nu\|\Lambda^{\alpha/\ell} w\|_{B^0_{\ell,\ell}}^\ell
+C_1c_{\alpha,r}\nu \ell^2 \left(1-2^{\frac\alpha \ell-\frac2r}\right)^{-l}\sum_{q>Q}\lambda_q^\alpha\|w_q\|_\ell^\ell \right)\, d\tau,
\end{equation}
for some absolute constants $C$ and $C_1$. Recall that $\ell$ is defined as in (\ref{eq:defofl}), and
\[
c_{\alpha,r}=\left\{
\begin{split}
\frac{c_0}{\alpha^2(r+1)^2}\left(1-2^{\frac2{r+1}-\frac2r}\right)^{\frac{\alpha(r+1)}{2}},&\qquad 0<\alpha\leq 1,\\
c_0(\alpha-1)^2\left(1-2^{\frac{\alpha-1}2-\frac2r}\right)^{\frac{2\alpha}{\alpha-1}}, &\qquad  1<\alpha< 2.
\end{split}
\right.
\]
Hence, choosing $c_0=\frac{C}{2C_1}$, we arrive at
\[
\|w(t)\|_{B^0_{\ell,\ell}}^\ell - \|w(t_0)\|_{B^0_{\ell,\ell}}^\ell \leq - \frac{C\nu}{2} \int_{t_0}^t \|\Lambda^{\alpha/\ell} w\|_{B^0_{\ell,\ell}}^\ell\, d\tau
\lesssim  - \lambda_0^\alpha \nu \int_{t_0}^t \| w\|_{B^0_{\ell,\ell}}^\ell\, d\tau,
\]
for all $T_1 \leq t_0 \leq t \leq T_2$. Combining it with Gr\"onwall's inequality gives the desired result.
\end{proof}

Clearly, Theorem \ref{thm-main-bound} implies the first part of Theorem \ref{thm}. To prove the second part, we need to introduce viscosity solutions and show that
the global attractor for such solutions  is bounded in $L^\infty$.

\bigskip

\section{$L^\infty$ estimates}
\label{sec:infty}

The goal of this section is to obtain an explicit $L^\infty$ bound on viscosity solutions to \eqref{QG} when the force $f$ is in $L^p$ for some $p>2/\alpha$.

A weak solution $\th(t)$ on $[0,T]$ is called a viscosity solution if there exist sequences $\e_n \to 0$ and $\th_n(t)$ satisfying
\begin{equation}\begin{split}\label{VQG}
\frac{\partial\theta_n}{\partial t}+u_n\cdot\nabla \theta_n+\nu\Lambda^\alpha\theta_n + \e_n \Delta \th_n=f,\\
u_n=R^\perp\theta_n,
\end{split}
\end{equation}
such that $\th_n \to \theta$ in $C_\mathrm{w}([0,T];L^2)$. Standard arguments imply that for any initial data $\th_0 \in L^2$ there exists a viscosity solution $\th(t)$ of \eqref{QG} on $[0,\infty)$ with $\th(0)=\th_0$ (see \cite{CC}, for example). The solution $\theta(t)$ may
enjoy some regularity depending on the force, but this is not needed for our argument.

In the case of $\alpha=1$ and zero force, Caffarelli and Vasseur derived a level set energy inequality using a harmonic extension \cite{CaV}. Here we sketch a modification of the proof from \cite{CD} extended to all  $\alpha>0$.

\begin{Lemma}
\label{le:existence}
Let $\alpha>0$ and $\th(t)$ be a viscosity solution to \eqref{QG} on $[0,T]$ with $\th(0) \in L^2$. Then
for every $\gamma \in \mathbb{R}$ it satisfies the level set energy inequality
\begin{equation}\label{truncated}
\frac{1}{2}\|\tilde\theta_\gamma(t_2)\|_2^2+\nu\int_{t_1}^{t_2}\|\Lambda ^{\frac{\alpha}{2}}\tilde\theta_\gamma\|_2^2 \, dt
\leq \frac{1}{2}\|\tilde\theta_\gamma(t_1)\|_2^2+\int_{t_1}^{t_2}\int_{\mathbb T^2}f\tilde\theta_\gamma \, dxdt,
\end{equation}
for all $t_2\in[t_1, T]$ and a.e. $t_1\in [0,T]$. Here $\tilde\theta_\gamma=(\theta-\gamma)_+$ or $\tilde\theta_\gamma=(\theta+\gamma)_{-}$.
\end{Lemma}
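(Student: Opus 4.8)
The plan is to obtain the level set energy inequality \eqref{truncated} for the viscosity solution $\theta$ by first establishing the analogous inequality with an equality (or a controlled error) for the regularized solutions $\theta_n$ of \eqref{VQG}, and then passing to the limit $\e_n \to 0$. Since $\theta_n$ solves a parabolic equation with the extra viscosity $\e_n\Delta\theta_n$, it is smooth enough (by standard parabolic regularity) to justify all the manipulations below rigorously; the only care needed is to keep every constant independent of $\e_n$ so the limit survives.

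First I would fix $\lambda \in \mathbb{R}$ and work with $\tilde\theta_{n,\lambda} = (\theta_n - \lambda)_+$; the case $(\theta_n+\lambda)_-$ is symmetric. The key algebraic identity is that for a convex function truncation, multiplying the equation by $\tilde\theta_{n,\lambda}$ (equivalently, testing against $(\theta_n-\lambda)_+$, which is a legitimate test function modulo mollification in time) yields, after integration over $\mathbb{T}^2$,
\[
\frac{1}{2}\frac{d}{dt}\|\tilde\theta_{n,\lambda}\|_2^2 + \nu\int_{\mathbb{T}^2}\Lambda^\alpha\theta_n \,\tilde\theta_{n,\lambda}\,dx + \e_n\int_{\mathbb{T}^2}|\nabla\tilde\theta_{n,\lambda}|^2\,dx = \int_{\mathbb{T}^2}f\,\tilde\theta_{n,\lambda}\,dx + \int_{\mathbb{T}^2}(u_n\cdot\nabla\theta_n)\tilde\theta_{n,\lambda}\,dx .
\]
Two points must be checked here: the nonlinear term vanishes, because $(u_n\cdot\nabla\theta_n)\tilde\theta_{n,\lambda} = u_n\cdot\nabla G(\theta_n)$ for the primitive $G$ with $G'(s)=(s-\lambda)_+$, and $\div u_n = \div R^\perp\theta_n = 0$, so its integral over the torus is zero. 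The $\e_n$ term is nonnegative and can simply be dropped, which is exactly why it does not obstruct the limit. The nonlocal dissipation term requires the pointwise inequality
\[
\Lambda^\alpha\theta_n \cdot (\theta_n-\lambda)_+ \;\geq\; (\theta_n-\lambda)_+\,\Lambda^\alpha(\theta_n-\lambda)_+ ,
\]
which is the standard Córdoba–Córdoba / Constantin–Vicol type inequality for the fractional Laplacian applied to a convex function of $\theta_n$ (here $s\mapsto(s-\lambda)_+$); integrating and using $\int \tilde\theta_{n,\lambda}\Lambda^\alpha\tilde\theta_{n,\lambda}\,dx = \|\Lambda^{\alpha/2}\tilde\theta_{n,\lambda}\|_2^2$ gives the dissipation term in \eqref{truncated}. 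Combining these and integrating in time from $t_1$ to $t_2$ yields \eqref{truncated} at the level of $\theta_n$, with $\nu\int_{t_1}^{t_2}\|\Lambda^{\alpha/2}\tilde\theta_{n,\lambda}\|_2^2\,dt$ on the left and $\int_{t_1}^{t_2}\int f\tilde\theta_{n,\lambda}\,dx\,dt$ on the right.

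Next I would pass to the limit. Since $\theta_n \to \theta$ in $C_\mathrm{w}([0,T];L^2)$, and the energy inequality for \eqref{VQG} gives a uniform bound on $\|\Lambda^{\alpha/2}\theta_n\|_{L^2_tL^2_x}$ (plus $\sqrt{\e_n}\|\nabla\theta_n\|_{L^2_tL^2_x}$ bounded, which also yields $\e_n\|\nabla\theta_n\|^2 \to 0$ or stays bounded), one extracts strong convergence $\theta_n \to \theta$ in $L^2((0,T)\times\mathbb{T}^2)$ by an Aubin–Lions argument, hence a.e. after a subsequence. The map $s\mapsto(s-\lambda)_+$ is Lipschitz, so $\tilde\theta_{n,\lambda}\to\tilde\theta_\lambda$ strongly in $L^2_tL^2_x$ and a.e.; this handles the right-hand side (since $f\in L^p$, $p>2/\alpha \geq 1$, dominated convergence applies once we note $\tilde\theta_{n,\lambda}$ is bounded in, say, $L^\infty_tL^2_x \cap L^2_t\dot H^{\alpha/2}$, hence in a space dual-paired with $L^p$). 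For the left side, the term $\frac12\|\tilde\theta_\lambda(t_2)\|_2^2$ needs $t_2$ to be a point where $\theta_n(t_2)\rightharpoonup\theta(t_2)$ weakly in $L^2$; weak lower semicontinuity of the norm gives $\|\tilde\theta_\lambda(t_2)\|_2 \leq \liminf\|\tilde\theta_{n,\lambda}(t_2)\|_2$ — and since $\theta\in C_\mathrm{w}$, this holds for every $t_2\in[t_1,T]$. Similarly, $\nu\int_{t_1}^{t_2}\|\Lambda^{\alpha/2}\tilde\theta_\lambda\|_2^2\,dt \leq \liminf \nu\int_{t_1}^{t_2}\|\Lambda^{\alpha/2}\tilde\theta_{n,\lambda}\|_2^2\,dt$ by weak lower semicontinuity of the $L^2_t\dot H^{\alpha/2}_x$ norm, using that the truncation is bounded in this space. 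Finally, the initial-time term $\frac12\|\tilde\theta_\lambda(t_1)\|_2^2$: one chooses $t_1$ in the full-measure set of Lebesgue points of $t\mapsto\|\tilde\theta_\lambda(t)\|_2^2$ where additionally $\theta_n(t_1)\to\theta(t_1)$ strongly in $L^2$ (such $t_1$ are a.e. by the strong $L^2_tL^2_x$ convergence), so that $\|\tilde\theta_{n,\lambda}(t_1)\|_2 \to \|\tilde\theta_\lambda(t_1)\|_2$; this is why \eqref{truncated} is only claimed for a.e. $t_1$.

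The main obstacle I anticipate is the rigorous justification of the pointwise fractional-Laplacian inequality $\tilde\theta_{n,\lambda}\,\Lambda^\alpha\theta_n \geq \tilde\theta_{n,\lambda}\,\Lambda^\alpha\tilde\theta_{n,\lambda}$ together with the legitimacy of using $(\theta_n-\lambda)_+$ as a test function — the truncation is only Lipschitz, not $C^1$, so one argues by approximating $(s-\lambda)_+$ by a sequence of smooth convex functions $\Phi_\delta$ with $0\leq\Phi_\delta'\leq 1$, $\Phi_\delta'' \geq 0$, applies the inequality for $\Phi_\delta(\theta_n)$ (valid since $\theta_n$ is smooth and $\Phi_\delta$ smooth convex), and then sends $\delta\to 0$. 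A secondary technical point is extracting a common subsequence of $\e_n$ along which all the convergences above hold and the exceptional null set of $t_1$ is fixed; this is routine diagonalization. Everything else — the vanishing of the transport term, dropping the $\e_n$-viscosity, the Grönwall-free time integration — is direct.
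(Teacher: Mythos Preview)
Your proposal is correct and follows essentially the same route as the paper: derive the truncated energy inequality at the level of the regularized solutions $\theta_n$, drop the nonnegative $\e_n$-viscosity term, and pass to the limit. The only difference is one of emphasis: the paper spells out the proof of the key inequality $\int_{\mathbb T^2}\Lambda^\alpha\theta\,(\theta-\lambda)_+\,dx\ge \|\Lambda^{\alpha/2}(\theta-\lambda)_+\|_2^2$ via the singular-integral kernel representation (checking $(f(x)-f(y))(g(x)-g(y))\ge (g(x)-g(y))^2$ for $g=(f-\lambda)_+$) and then dismisses the limit passage as ``clear'', whereas you cite the inequality as a known C\'ordoba--C\'ordoba/Constantin--Vicol fact and instead give the details of the compactness and lower-semicontinuity argument for $\e_n\to 0$.
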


\begin{proof}

We only show a priori estimates. It is clear how to pass to the limit in \eqref{VQG} as $\e \to 0$.
Denote $\varphi(\theta)=(\theta-\gamma)_+$. Note that $\varphi$ is Lipschitz and 
\begin{equation}\notag
\varphi'(\theta)\varphi(\theta)=\varphi(\theta).
\end{equation}
Multiplying the first equation of (\ref{QG}) by $\varphi'(\theta)\varphi(\theta)$ and integrating over $\mathbb T^2$ yields
\begin{equation}\label{energy1}
\begin{split}
\frac{1}{2}\frac{d}{dt}\int_{\mathbb T^2}\varphi^2(\theta)\,dx+\int_{\mathbb T^2}\nabla\cdot\left(\frac{1}{2}\varphi^2(\theta)u\right)\,dx\\
+\nu\int_{\mathbb T^2}\Lambda^\alpha\theta\varphi(\theta)\,dx
=\int_{\mathbb T^2}f\varphi(\theta)\,dx.
\end{split}
\end{equation}
Let $f, g \in C^{\infty}(\mathbb{T}^2)$ such that $g(x) = (f(x)-\gamma)_+$. Then one can easily verify that
\[
(f(x)-f(y))(g(x)-g(y)) \geq (g(x)-g(y))^2.
\]
Now, by Fubini's Theorem, 
\[
\begin{split} 
&\sum_{j\in \mathbb Z^2}\int_{\mathbb T^2} P.V. \int_{\mathbb T^2}\frac{f(x)-f(y)}{|x-y+Lj|^{2+\alpha}}g(x)\, dy \, dx=\\
&\sum_{j\in \mathbb Z^2}\int_{\mathbb T^2} P.V. \int_{\mathbb T^2}\frac{f(y)-f(x)}{|x-y+Lj|^{2+\alpha}}g(y)\, dy \, dx.
\end{split}
\]
Note that (see \cite{CC})
\[
\Lambda^\alpha f=  \frac{c_{\alpha,r}}{2}\sum_{j\in \mathbb Z^2} P.V. \int_{\mathbb T^2}\frac{f(x)-f(y)}{|x-y+Lj|^{2+\alpha}}\, dy. 
\]
Therefore
\[
\begin{split}
\int_{\mathbb T^2}\Lambda^\alpha f g\, dx &=
\frac{c_{\alpha,r}}{2}\sum_{j\in \mathbb Z^2}\int_{\mathbb T^2} P.V. \int_{\mathbb T^2}\frac{(f(x)-f(y))(g(x)-g(y))}{|x-y+Lj|^{2+\alpha}}\, dy \, dx\\
&\geq \frac{c_{\alpha,r}}{2}\sum_{j\in \mathbb Z^2}\int_{\mathbb T^2} \int_{\mathbb T^2}\frac{(g(x)-g(y))^2}{|x-y+Lj|^{2+\alpha}}\, dy \, dx\\
&=\int_{\mathbb T^2}\left|\Lambda^{\frac{\alpha}{2}} g(x)\right|^2\,dx.
\end{split}
\]
Clearly this inequality also holds for $\theta$ and $\varphi(\theta)$, giving
\[
\int_{\mathbb T^2}\Lambda^\alpha\theta\varphi(\theta)\,dx \geq \int_{\mathbb T^2}\left|\Lambda^{\frac{\alpha}{2}}\varphi(\theta)\right|^2dx
\]
Thus, it follows from (\ref{energy1}) that
\begin{equation}\notag
\begin{split}
\frac{1}{2}\frac{d}{dt}\int_{\mathbb T^2}\varphi^2(\theta)dx+\int_{\mathbb T^2}\nabla\cdot\left(\frac{1}{2}\varphi^2(\theta)u\right)\,dx\\
+\nu\int_{\mathbb T^2}\left|\Lambda^{\frac{\alpha}{2}}\varphi(\theta)\right|^2\,dx
\leq\int_{\mathbb T^2}f\varphi(\theta)\,dx.
\end{split}
\end{equation}
Since the integral $\int_{\mathbb T^2}\nabla\cdot\left(\frac{1}{2}\varphi^2(\theta)u\right)dx=0$,
this gives us the truncated energy inequality (\ref{truncated}). The case of $\varphi(\theta)=(\theta+\gamma)_-$ is similar.

\end{proof}

Now we can use De Giorgi iteration to obtain explicit bounds on the $L^\infty$ norm. For $\alpha=1$ this was done in \cite{CaV} in the unforced case $f=0$, and similarly
in \cite{CD} for $f\in L^p$. $p>2$. Here we extend the proof in \cite{CD} to cover the whole range $\alpha>0$.

\begin{Lemma}
\label{Linfty}
Let $\alpha \in (0, 2)$ and $\theta$ be a viscosity solution of \eqref{QG} on $[0,\infty)$ with $\th(0) \in L^2$ and $f\in L^p(\mathbb T^2)$ for some $p\in(2/\alpha,\infty]$. Then,
for every $t>0$,
\begin{equation}\label{infty-norm}
\|\theta(t)\|_{L^\infty}\lesssim  \frac{\|\theta(0)\|_{2}}{(\nu t)^{\frac1\alpha}}+
\lambda_0^{\frac2p -\alpha} \frac{\|f\|_p}{\nu}\left(1+\lambda_0^{\frac{\alpha}{2}-1}(\nu t)^{\frac{1}{2}-\frac{1}{\alpha}}\right).
\end{equation}
\end{Lemma}
\begin{proof} Consider the levels 
\begin{equation}\notag
\gamma_k=M(1-2^{-k})
\end{equation}
for some $M$ to be determined later,
and denote the truncated function 
\[
\theta_k=(\theta-\gamma_k)_+.
\]
Fix $t_0>0$. Let $T_k=t_0(1-2^{-k})$ and  define the energy levels as:
\begin{equation}\notag
U_k=\sup_{T_k\leq t \leq t_0}\|\theta_k(t)\|_2^2+2\nu\int_{T_k}^{t_0}\|\Lambda^{\frac{\alpha}{2}}\theta_k(t)\|_2^2 \,dt.
\end{equation}
We take $\tilde\theta=\theta_k$ and $t_1=s \in(T_{k-1},T_k)$, $t_2=t>T_k$ in the truncated energy inequality (\ref{truncated}).
Then taking $t_1=s \in(T_{k-1},T_k)$, $t_2=T>t$, adding the two inequalities, taking $\limsup_{T\to t_0}$ and then $\sup_{t\in[T_k, t_0]}$ gives 
\begin{equation}\notag
U_k\leq 2\|\theta_k(s)\|_2^2+2\int_{T_{k-1}}^{t_0} \int_{\mathbb T^2}\left|f(x) \theta_k(x,\tau)\right|\, dxd\tau,
\end{equation}
for a.a. $s \in (T_{k-1}, T_k)$.
Taking the average in $s$ on $[T_{k-1}, T_k]$ yields
\begin{equation}\label{truncated1}
U_k\leq \frac{2^{k+1}}{t_0}\int_{T_{k-1}}^{t_0}\int_{\mathbb T^2} \theta_k^2(x,s)\, dxds+2\int_{T_{k-1}}^{t_0} \int_{\mathbb T^2}\left|f(x) \theta_k(x,t)\right|\, dxdt.
\end{equation}

Interpolating between $L^\infty(L^2)$ and $L^2(H^{\frac{\alpha}{2}})$, we get
\begin{equation}\label{interpolation}
\int_{T_{k}}^{t_0}\int_{\mathbb T^2} |\theta_{k}|^{2+\alpha}\,dxdt \leq
 \frac{C}{\nu} U_k^{\frac{2+\alpha}{2}},
\end{equation}
for some absolute constant $C$.

Note that
\begin{equation}\notag
\theta_{k-1}\geq 2^{-k}M \qquad \mbox { on } \left\{(x,t):\theta_k(x,t)>0\right\},
\end{equation}
and hence
\begin{equation}\notag
1_{\left\{\theta_k>0\right\}}\leq \frac{2^{k\alpha}}{M^\alpha}\theta_{k-1}^\alpha.
\end{equation}
Therefore, using the fact that $\theta_k\leq\theta_{k-1}$ and (\ref{interpolation}), we get
\begin{equation}\label{truncated2}
\begin{split}
\frac{2^{k+1}}{t_0}\int_{T_{k-1}}^{t_0}\int_{\mathbb T^2} \theta_k^2(x,s)\,dxds
\leq &\frac{2^{k+1}}{t_0}\int_{T_{k-1}}^{t_0}\int_{\mathbb T^2} \theta_{k-1}^2(x,s)1_{\left\{\theta_k>0\right\}}\, dxds\\
\leq &\frac{2^{k(\alpha+1)+1}}{t_0M^\alpha}\int_{T_{k-1}}^{t_0}\int_{\mathbb T^2} |\theta_{k-1}|^{2+\alpha}\, dxds\\
\leq &C\frac{2^{k(\alpha+1)+1}}{\nu t_0M^\alpha}U_{k-1}^{\frac{2+\alpha}2}.
\end{split}
\end{equation}
On the other hand, since $f\in L^p(\mathbb T^2)$ with $p>\frac2\alpha$, we obtain, for $p'=\frac{p}{p-1}$ (or $p'=1$ when $p=\infty$),
\begin{equation}\label{truncated3}
\begin{split}
&\int_{T_{k-1}}^{t_0} \int_{\mathbb T^2}\left|f(x) \theta_k(t)\right|dxdt\\
\leq &\|f\|_p\int_{T_{k-1}}^{t_0} \left(\int_{\mathbb T^2}|\theta_k|^{p'}dx\right)^{1/{p'}}dt\\
\leq &\|f\|_p\int_{T_{k-1}}^{t_0} \left(\int_{\mathbb T^2}|\theta_{k-1}|^{p'}1^{2+p'\alpha-p'}_{\left\{\theta_k>0\right\}}dx\right)^{1/{p'}}dt\\
\leq &\|f\|_p\frac{2^{k({2/{p'}+\alpha-1})}}{M^{2/{p'}+\alpha-1}}\int_{T_{k-1}}^{t_0} \left(\int_{\mathbb T^2}|\theta_{k-1}|^{2+p'\alpha}dx\right)^{1/{p'}}dt\\
\leq &\|f\|_p\frac{2^{k({2/{p'}}+\alpha-1)}}{M^{2/{p'}+\alpha-1}}\sup_{t\geq T_{k-1}}\left(\int_{\mathbb{T}^2}|\theta_{k-1}|^2\, dx\right)^{\frac{2-2p'+p'\alpha}{2p'}}\int_{T_{k-1}}^{t_0} \left(\int_{\mathbb T^2}|\theta_{k-1}|^{\frac4{2-\alpha}}dx\right)^{\frac{2-\alpha}2}dt\\
\leq &\|f\|_p\frac{2^{k({2/{p'}}+\alpha-1)}}{\nu M^{2/{p'}+\alpha-1}}U_{k-1}^{1+(2-2p'+p'\alpha)/{2p'}}.
\end{split}
\end{equation}
Note that $(2-2p'+p'\alpha)/{2p'}>0$, since $p>2/\alpha$.
Combining (\ref{truncated1}), (\ref{truncated2}) and (\ref{truncated3}) yields
\begin{equation}\label{iteration}
U_k\leq C\frac{2^{k(\alpha+1)+1}}{\nu t_0M^\alpha}U_{k-1}^{(2+\alpha)/2}+C\|f\|_p\frac{2^{k(2/{p'}+\alpha-1)}}{\nu M^{2/{p'}+\alpha-1}}U_{k-1}^{1+(2-2p'+p'\alpha)/(2p')}
\end{equation}
with a constant $C$ independent of $k$. We claim that for a large enough $M$, the above nonlinear iteration inequality implies that $U_k$ converges to 0 as $k\to \infty$.
Thus $\theta(t_0)\leq M$ for almost all $x$. The same argument applied to $\theta_k=(\theta+\gamma_k)_-$ also gives
a lower bound.

To prove the above claim (and automatically get an explicit expression for $M$ in terms of $t_0$ and $U_0$), first note that $\theta \leq 0$ almost everywhere if $U_0=0$. Assume now $U_0>0$. Denote $\delta=(2-2p'+p'\alpha)/(2p')$. Note that $0< \delta <\alpha/2$.  Define $V_k=\eta_k U_k$ with $\eta_k=2^{mk}\eta_0$. We choose 
\begin{equation}\label{para}
\begin{split}
m&=\max\left\{2+\frac2\alpha, \ \frac{2+p'(\alpha-1)}{p'\delta}\right\}, \qquad \eta_0=\frac1{2U_0},\\
 M&=\frac{(4C)^{\frac1\alpha}2^{m(\frac1\alpha+\frac12)}(2U_0)^{\frac12}}{(\nu t_0)^{1/\alpha}}+\left(\frac{2C2^{m(1+\delta)}\|f\|_p}{\nu}\right)^{\frac{p'}{2+p'(\alpha-1)}}(2U_0)^{\frac{p'\delta}{2+p'(\alpha-1)}}\\
&\sim U_0^{\frac12}(\nu t_0)^{-\frac1\alpha}+\left(\frac{\|f\|_p}{\nu}\right)^{\frac{p'}{2+p'(\alpha-1)}}U_0^{\frac{p'\delta}{2+p'(\alpha-1)}}.
\end{split}
\end{equation}
Based on the choice of the parameters $m, M, \eta_0$, one can verify that 
\[
C\eta_k\frac{2^{k(\alpha+1)+1}}{\nu t_0M^\alpha}U_{k-1}^{1+\alpha/2}\leq\frac12\eta_{k-1}^{1+\alpha/2}U_{k-1}^{1+\alpha/2}, \qquad k \geq 1;
\]
\[
C\eta_k\|f\|_p\frac{2^{k({2/{p'}}+\alpha-1)}}{\nu M^{2/{p'}+\alpha-1}}U_{k-1}^{1+\delta}\leq\frac12\eta_{k-1}^{1+\delta}U_{k-1}^{1+\delta}, \qquad k \geq 1.
\]
It follows from (\ref{iteration}) that
\begin{equation}\notag
\begin{split}
V_k=\eta_k U_k\leq &C\eta_k\frac{2^{k(\alpha+1)+1}}{\nu t_0M^\alpha}U_{k-1}^{1+\alpha/2}+C\eta_k\|f\|_p\frac{2^{k({2/{p'}}+\alpha-1)}}{\nu M^{2/{p'}+\alpha-1}}U_{k-1}^{1+\delta}\\
\leq&\frac12\eta_{k-1}^{1+\alpha/2}U_{k-1}^{1+\alpha/2}+\frac12\eta_{k-1}^{1+\delta}U_{k-1}^{1+\delta}\\
=&\frac12 V_{k-1}^{1+\alpha/2}+\frac12 V_{k-1}^{1+\delta},
\end{split}
\end{equation}
for all $k\geq 1$.
We also have $V_0=\eta_0 U_0<1/2$. Recalling that $0<\delta<\alpha/2$, we arrive at
\[
V_k\leq V_{k-1}^{1+\delta}, \qquad k\geq 1.
\]
It implies that  $V_k\to 0$ and hence $U_k\to 0$ as $k\to \infty$. Thus (\ref{para}) yields
\begin{equation} \label{eq:M}
\|\theta(t_0)\|_{\infty} \lesssim U_0^{\frac12}(\nu t_0)^{-\frac1\alpha}+\left(\frac{\|f\|_p}{\nu}\right)^{\frac{p}{p+p\alpha-2}}U_0^{\frac{p\alpha - 2}{2(p+p\alpha-2)}}.
\end{equation}

Now it follows from the energy inequality that
\begin{equation}\notag
\begin{split}
\|\theta(t_0)\|_2^2+2\nu\int_0^{t_0}\|\Lambda^{\frac{\alpha}{2}}\theta(t)\|_2^2\, dt\leq& \|\theta(0)\|_2^2+\int_0^{t_0}\int_{\T^2}f(x)\theta(x,t) \, dxdt\\
\leq &\|\theta(0)\|_2^2+\frac C\nu\int_0^{t_0}\|f\|_{H^{-\frac\alpha2}}^2\, dt+\nu\int_0^{t_0}\|\theta\|_{H^{\frac\alpha2}}^2\, dt.
\end{split}
\end{equation}
Applying Poincar\'e's and H\"older's inequalities, this leads to 
\begin{equation}\notag
\begin{split}
\|\theta(t_0)\|_2^2+\nu\int_0^{t_0}\|\Lambda^{\frac{\alpha}{2}}\theta(t)\|_2^2\, dt
\lesssim &\|\theta(0)\|_2^2+\frac 1\nu\int_0^{t_0}\|f\|_{H^{-\frac\alpha2}}^2\, dt\\
\lesssim &\|\theta(0)\|_2^2+\frac {t_0\lambda_0^{-\alpha}}\nu \|f\|_{2}^2\\
\lesssim &\|\theta(0)\|_2^2+\frac {t_0\lambda_0^{\frac 4p-2-\alpha}}\nu \|f\|_{p}^2,\\
\end{split}
\end{equation}
where $\lambda_0=L^{-1}$. Therefore, by the definition of $U_0$, we have
\begin{equation}\label{U01}
\begin{split}
U_0=&\sup_{0\leq t \leq t_0}\|\theta_+(t)\|_2^2+2\nu\int_{0}^{t_0}\|\Lambda^{\frac{\alpha}{2}}\theta_+(t)\|_2^2 \,dt\\
\lesssim &\|\theta(0)\|_2^2 + \frac{t_0\lambda_0^{\frac{4}{p}-2-\alpha}}{\nu}\|f\|_p^2.
\end{split}
\end{equation}
Applying Young's inequality to \eqref{eq:M} and combining it with (\ref{U01}) yields
\begin{equation} \label{eq:mainbound}
\begin{split}
\|\theta(t)\|_{\infty} \lesssim &U_0^{\frac12}(\nu t)^{-\frac1\alpha}+\left(\frac{\|f\|_p}{\nu}(\nu t)^{1-\frac2{p\alpha}}\right)^{\frac{p}{p+p\alpha-2}}\left(U_0^{\frac12}(\nu t)^{-\frac1\alpha}\right)^{\frac{p\alpha - 2}{p+p\alpha-2}}\\
\lesssim &U_0^{\frac12}(\nu t)^{-\frac1\alpha}+\frac{\|f\|_p}{\nu}(\nu t)^{1-\frac2{p\alpha}}+U_0^{\frac12}(\nu t)^{-\frac1\alpha}\\
\lesssim &\|\theta(0)\|_2 (\nu t)^{-\frac1\alpha}+\lambda_0^{\frac2p-1-\frac\alpha2}\|f\|_p\nu^{-\frac12-\frac1\alpha}t^{\frac12-\frac1\alpha}+\|f\|_p\nu^{-\frac2{p\alpha}}t^{1-\frac2{p\alpha}}\\
\lesssim &\|\theta(0)\|_2 (\nu t)^{-\frac1\alpha}+\lambda_0^{\frac2p-1-\frac\alpha2}\|f\|_p\nu^{-\frac12-\frac1\alpha}t^{\frac12-\frac1\alpha},
\end{split}
\end{equation}
for $0<t\leq \lambda_0^{-\alpha}\nu^{-1}$. We will now show that this bound holds for
all $t>0$. Indeed, take $t=T=\lambda_0^{-\alpha}\nu^{-1}$ in \eqref{eq:mainbound} and
then shift it by $t-T$, which gives
\begin{equation} \label{eq:mainbound2}
\|\theta(t)\|_\infty \lesssim \frac{\|\theta(t-T)\|_2}{(\nu T)^{\frac{1}{\alpha}}}+
\lambda_0^{\frac2p -\alpha} \frac{\|f\|_p}{\nu}.
\end{equation}
Thanks to the energy inequality, we also have
\[
\|\theta(t-T)\|_2^2 \lesssim \|\theta(0)\|_2^2e^{-\nu(2\pi\lambda_0)^\alpha (t-T)}+
\frac{\lambda_0^{-2\alpha-2+\frac4p}}{\nu^2} \|f\|_p^2.
\]
Combining this with \eqref{eq:mainbound2} we obtain
\[
\begin{split}
\|\theta(t)\|_\infty &\lesssim \frac{\|\theta(0)\|_2}{(\nu T)^{\frac{1}{\alpha}}}
e^{-\frac{\nu}2(2\pi\lambda_0)^\alpha (t-T)}+
\lambda_0^{\frac2p -\alpha} \frac{\|f\|_p}{\nu}\\
&\lesssim \frac{\|\theta(0)\|_2}{(\nu t)^{\frac{1}{\alpha}}}+
\lambda_0^{\frac2p -\alpha} \frac{\|f\|_p}{\nu},
\end{split}
\]
for $t\geq T$. Here we used the fact that $t^\beta e^{-\delta(t-1)}\leq \left(\frac{\beta}{\delta}\right)^\beta e^{\delta-\beta}$ on $(0,\infty)$.
\end{proof}

\section{Global attractor and  bounds on the determining wavenumber}
\subsection{Global attractor}
Thanks to the energy inequality, we have 
\[
\|\theta(t)\|_2^2 \leq \|\theta(0)\|_2^2e^{-\nu(2\pi\lambda_0)^\alpha t}+\frac{\|\Lambda^{-\frac{\alpha}{2}}f\|_2^2}{\nu^2(2\pi\lambda_0)^\alpha}\left(1-e^{-\nu(2\pi\lambda_0)^\alpha t }\right), \qquad t>0,
\]
where $\lambda_q= 2^q/L$ as before.
Denote
\[
B_{L^2} = \left\{ \theta \in L^2: \|\theta\|_2 \leq R_{2} \right\}, \qquad R_{2} = \frac{\|\Lambda^{-\frac{\alpha}{2}}f\|_2}{\nu\lambda_0^{\alpha/2}}.
\]
Then for any solution $\theta(t)$ there exists a time $t_{L^2}$ that depends only on $\|\theta(0)\|_2$, such that
\[
\theta(t) \in B_{L^2}, \qquad \forall t \geq t_{L^2}.
\]
So the set $B_{L^2}$ is an absorbing ball in $L^2$.
Moreover, there is a global attractor $\mathcal{A} \subset B_{L^2}$,
\[
\mathcal{A} = \{\theta(0): \theta(t) \text{ is a complete bounded trajectory, i.e., } \theta \in L^\infty((-\infty,\infty);L^2)\}.
\]
In \cite{CD}, in the critical case $\alpha=1$, we proved that $\mathcal{A}$ is a compact global attractor in the classical sense. It uniformly attracts bounded sets in $L^2$,
it is the minimal closed attracting set, and it is the $L^2$-omega limit of the absorbing ball $B_{L^2}$. This was done using the De Georgi iteration method to obtain
$L^2$ continuity of solutions (which is automatically true in the subcritical case $\alpha>1$), and applying the framework of evolutionary systems in \cite{C}. With all the
ingredients at hand, the framework \cite{C} gives the existence of the global attractor in the subcritical case as well. However, in the critical case $\alpha<1$, we only
know the existence of a weak global attractor at this point.

We also proved in \cite{CD} that the global attractor $\mathcal{A}$ is bounded in $L^\infty$.
Indeed, Lemma~\ref{Linfty} implies that
\[
\mathcal{A} \subset B_{L^\infty},
\]
where
\[
B_{L^\infty}=\left\{\theta \in B_{L^2}: \|\theta\|_\infty \leq  R_\infty \right\}, 
\]
and
\begin{equation} \label{eq:boundOnR}
R_\infty \sim \lambda_0^{\frac2p-\alpha}\frac{\|f\|_p}{\nu}.
\end{equation}
Moreover, for any solution $\theta(t)$ there exists time $t_{L^\infty}$ that depends only on $\|\theta(0)\|_2$,  such that
\[
\theta(t) \in B_{L^\infty}, \qquad \forall t\geq t_{L^\infty}.
\]
So $B_{L^\infty}$ is an absorbing set in $L^2$. 

\subsection{Proof of the second part of Theorem \ref{thm}}
\label{sec-pf-thm}
For a viscosity solution $\theta(t)$ on the global attractor, i.e., such that $\theta \in L^\infty((-\infty,\infty);L^2)$, we have
\[
\begin{split}
\|\theta(t)\|_{B^0_{\ell,\ell}} &\lesssim \|\theta(t)\|_\infty^{1-\frac{2}{\ell}} \|\theta(t)\|_2^{\frac{2}{\ell}}\\
&\lesssim R_{\infty}^{1-\frac{2}{\ell}} R_{2}^{\frac{2}{\ell}},
\end{split}
\]
for all $t$. Hence, fixing $t$ in \eqref{eq:boundonldif-expon} and taking a limit as $t_0$ goes to $-\infty$, we obtain the desired result.
\qed

 In the following two subsections we will derive explicit bounds on $\lb_\theta$ for solutions $\theta$ in the absorbing set $B_{L^\infty}$.

\subsection{The subcritical case $\alpha\in (1,2)$: proof of Theorem \ref{thm-theta-est1}}

In this case  $\lb_{\theta,r}$ is a determining wavenumber for all $r\in(\frac{2\alpha}{\alpha-1},\frac{4}{\alpha-1} )$. When  $r = \frac{4}{\alpha -1 }$ our estimates blow up. Nevertheless, we are able to pass to a limit as $r  \to \frac{4}{\alpha -1 }=:r_0$ and show that $\lb_{\theta,r} \leq \lb_{\theta}$ for  some $r<r_0$, where 
\[ 
\lb_{\theta}(t)=\min\{\lambda_q:\lambda_p^{\frac{1-\alpha}{2}}\|\theta_{p}\|_{\frac{4}{\alpha-1}}<{\textstyle \frac{c_{\alpha,r}}{2}}\nu \quad \forall p>q, \ \mbox {and}\  \lambda_q^{-\alpha}\sum_{p\leq q}\lambda_p\|\theta_{p}\|_{\infty}<c_{\alpha,r}\nu  \},
\]
thanks
to the following observation.

\begin{Lemma} \label{l:limit-in-r} Let $\alpha>1.$
There exists a function $r(M)\in(\frac{2\alpha}{\alpha-1},\frac{4}{\alpha-1} )$, such that 
\[
\lb_{\theta,r(M)}(t) \leq \lb_{\theta}(t), \qquad t\in [T_1,T_2],
\]
provided
\[
\|\theta\|_{L^\infty(\mathbb{T} \times [T_1,T_2])} < M.
\]
\begin{proof}
It is sufficient to check the first condition in the definition of $\lb_{\theta,r}$.
We will choose $r> 2\alpha/(\alpha-1) >2$, in which case
$\|\th_p\|_r < L^{\frac 2r}M$, where $L$ is the size of the torus. Now since $\alpha-1-2/r >0$ we have that 
\[
\|\th_p\|_r < L^{\frac 2r}M < c_{\alpha,r} \nu \lambda_p^{\alpha-1-\frac{2}{r}}
\]
for
\[
\lambda_p > \left(\frac{L^{\frac 2r}M}{c_{\alpha,r}\nu}\right)^{\frac{1}{\alpha-1-2/r}}=:N(r).
\]
If $\lb_{\theta} \geq N(r)$ then we are done since the first condition in the definition of $\lb_{\theta, r}$ is satisfied above $N(r)$. Otherwise, for $\lambda_p \in (\lb_{\theta}, N(r))$ we have
\[
L^{\frac{2}{r}-\frac{2}{r_0}}\lambda_p^{\frac{2}{r}-\frac{2}{r_0}} < N(r)^{\frac{2}{r}-\frac{2}{r_0}} \to 1 \qquad \text{as} \qquad r \to r_0^-, \qquad \mbox { with } \quad  r_0=\frac{4}{\alpha-1},
\]
and  
\[\lambda_p^{\frac{1-\alpha}2}\|\theta_p\|_{r_0}<{\textstyle \frac{1}{2}}c_{\alpha,r} \nu.\]
Therefore
\[
\begin{split}
\lambda_p^{1-\alpha+\frac{2}{r}} \|\theta_p\|_r &< {\textstyle \frac{3}{2}} \lambda_p^{1-\alpha+\frac{2}{r}}L^{2(\frac1r-\frac1{r_0})} \|\theta_p\|_{r_0} \\
&= {\textstyle \frac{3}{2}}\lambda_p^{\frac{1-\alpha}2}  \|\theta_p\|_{r_0} \lambda_p^{\frac{2}{r}-\frac{2}{r_0}}L^{2(\frac1r-\frac1{r_0})}\\
&<{\textstyle \frac{3}{4}}  c_{\alpha,r}\nu N(r)^{\frac{2}{r}-\frac{2}{r_0}}\\
&< c_{\alpha,r}\nu,
\end{split}
\]
provided $r_0-r$ is small enough, which means $\lambda_p > \lb_{\theta, r}$. This concludes the proof.
\end{proof}
 
\end{Lemma}

We will now estimate $\lb_\theta$ for $\theta \in B_{L^\infty}$.
To verify the first condition in the definition of $\lb_{\theta}$ we note that
\[
\begin{split}
\|\theta_p\|_{\frac{4}{\alpha-1}} &\leq \lambda_0^{\frac{1-\alpha}{2}}\|\theta_p\|_{\infty}
\leq \lambda_0^{\frac{1-\alpha}{2}}R_\infty 
 < {\textstyle \frac{1}{2}} \lambda_p^{\frac{\alpha-1}{2}} c_0\nu(\alpha-1)^2,
\end{split}
\]
provided $\lambda_p > \lambda_0^{-1}\left( \frac{2R_\infty L^\alpha}{(\alpha-1)^2c_0\nu} \right)^{\frac{2}{\alpha-1}}$.

For the second condition we estimate
\[
\sum_{p\leq q} \lambda_p \|\theta_{p}\|_{\infty} \leq 2\lambda_q R_\infty
< \lambda_q^{\alpha} c_0\nu (\alpha-1)^2,
\]
provided $\lambda_q> \left(\frac{2R_\infty}{(\alpha-1)^2c_0\nu}\right)^{\frac1{\alpha-1}}$. 

Therefore, 
\begin{equation} \label{eq:boundOnLambda}
\lb_{\theta}\leq\max\left\{\lambda_0^{-1}\left( \frac{2R_\infty}{(\alpha-1)^2c_0\nu} \right)^{\frac{2}{\alpha-1}}, \qquad \left(\frac{2R_\infty}{(\alpha-1)^2c_0\nu}\right)^{\frac1{\alpha-1}}\right\},
\end{equation}
which is finite since $\alpha>1$. Clearly, when the force is large enough, the first term in \eqref{eq:boundOnLambda} dominates and
\[
\lb_{\theta}\leq \lambda_0^{-1}\left( \frac{2R_\infty}{(\alpha-1)^2c_0\nu} \right)^{\frac{2}{\alpha-1}}.
\]
When $f \in L^2$, \eqref{eq:boundOnR}  gives
\[
R_\infty \sim \lambda_0^{1-\alpha}\left(\frac{\|f\|_2}{\nu}\right).
\]
Therefore
\[
\lb_{\theta}\lesssim L^{3}\left( \frac{\|f\|_2}{(\alpha-1)^2\nu^2} \right)^{\frac{2}{\alpha-1}}.
\]

\subsection{The critical case $\alpha=1$: proof of Theorem \ref{thm-theta-est2}}

In this section we consider the critical case $\alpha=1$ assuming that $f\in L^\infty\cap H^1$ and the initial data $\theta(0) \in H^1$. In this case it is known that there exists
a global solution $\theta \in C([0,\infty);H^1) \cap L^2_{loc}((0,\infty);H^{3/2})$. Moreover, 
\[
\|\theta(t)\|_{C^h} \lesssim \|\theta(t_0)\|_{\infty} + \frac{\|f\|_\infty}{\nu}, \qquad \forall t\geq t_0+\frac{3}{2(1-h)},
\] 
where $h =\min\left\{ \frac{\nu}{c_1\|\theta(0)\|_{\infty} + c_2\|f\|_\infty/\nu}, \frac{1}{4}\right\}$
(see Theorem 4.2 and (4.19) in  \cite{CCV}). We will not keep track of the length $L$ in this subsection in order not to overwhelm the estimates.
Since $f\in L^\infty$, the radius of $B_{L^\infty}$ in  \eqref{eq:boundOnR} becomes  $R_\infty \sim \frac{\|f\|_\infty}{\nu}$. So when $t$ is large enough we have  $\|\theta(t)\|_{C^h}  \leq R_{C^h} \sim \frac{\|f\|_\infty}{\nu}$.
Now we will estimate the determining wavenumber 
\[
\lb_{\theta,r}(t)=\min\{\lambda_q:\lambda_p^{\frac2r}\|\theta_{p}\|_r< c_{1,r}\nu \quad \forall p>q, \ \mbox { and }
\ \lambda_q^{-1}\sum_{p\leq q}\lambda_p\|\theta_p\|_\infty< c_{1,r}\nu   \},
\]
where  $r > 3$. Regarding the first condition, note that
\[
\|\theta_p\|_r \leq \|\theta_p\|_\infty  \leq \lambda_p^{-h}R_{C^h} < c_{1,r}\nu \lambda_p^{-\frac{2}{r}},
\]
provided
\[
\lambda_p \geq \left(\frac{R_{C^h}}{c_{1,r}\nu}\right)^{\frac{1}{h-2/r}}, \qquad \text{and} \qquad h> \frac{2}{r}.
\]
As for the second condition,
\[
\sum_{p\leq q} \lambda_p \|\theta_{p}\|_{\infty} \leq 2\lambda_q^{1-h} R_{C^h}
< c_{1,r}\nu \lambda_q
\]
provided
\[
\lambda_q > \left(\frac{2R_{C^h}}{c_{1,r}\nu}\right)^{\frac{1}{h}}.
\]

Therefore, 
\[
\lb_{\theta,r}\leq\max\left\{ \left(\frac{R_{C^h}}{c_{1,r}\nu}\right)^{\frac{1}{h-2/r}}, \qquad   \left(\frac{2R_{C^h}}{c_{1,r}\nu}\right)^{\frac{1}{h}}  \right\}.
\]
Since $h \lesssim \frac{\nu^2}{\|f\|_\infty}$ and $ R_{C^h} \sim \frac{\|f\|_\infty}{\nu}$, we obtain
\[
\lb_{\theta,r}\lesssim \left(\frac{\|f\|_\infty}{\nu^2}\right)^\frac{c\|f\|_\infty}{\nu^2},
\]
for some absolute (depending on $L$)  constant $c$ and some large enough $r$.


\section*{Acknowledgement}
This work was partially supported by NSF grants DMS--1108864 and DMS--1517583.
The authors thank anonymous referees for careful reading the manuscript and constructive comments.

\end{document}